\documentclass[12pt,reqno]{amsart}
\usepackage{fullpage}
\usepackage{times}
\usepackage{amsmath,amssymb,amsthm,url}
\usepackage[utf8]{inputenc}
\usepackage[english]{babel}
\usepackage{comment}
\usepackage{bbm}
\usepackage{enumerate}

\usepackage{bm}
\usepackage{graphicx}
\usepackage{mathrsfs}
\usepackage[colorlinks=true, pdfstartview=FitH, linkcolor=blue, citecolor=blue, urlcolor=blue]{hyperref}
\usepackage[vlined,ruled]{algorithm2e}
\usepackage{mathtools}
\newtheorem{theorem}{Theorem}[section]
\newtheorem{proposition}[theorem]{Proposition}
\newtheorem{lemma}[theorem]{Lemma}

\newtheorem{corollary}[theorem]{Corollary}

\theoremstyle{definition}
\newtheorem{definition}[theorem]{Definition}
\newtheorem{remark}[theorem]{Remark}

\newcommand{\R}{\mathbb{R}} 
\newcommand{\N}{\mathbb{N}}
\newcommand{\Z}{\mathbb{Z}}

\newcommand{\eps}{\varepsilon}
\title{The Prouhet--Tarry--Escott problem for subsets with small doubling in integral domains}
\author{Ernie Croot}
\address{School of Mathematics\\ Georgia Institute of Technology\\ Atlanta, GA 30332\\ United States}
\email{ernest.croot@math.gatech.edu}
\author{Junzhe Mao}
\address{School of Mathematics\\ Georgia Institute of Technology\\ Atlanta, GA 30332\\ United States}
\email{jmao87@gatech.edu}
\author{Chi Hoi Yip}
\address{Department of Mathematics, Hong Kong University of Science and Technology, Clear Water Bay, Hong Kong}
\email{machyip@ust.hk}
\subjclass[2020]{Primary 11B30; Secondary 11D72, 11P05}
\keywords{Prouhet--Tarry--Escott problem, Hilbert cubes, Vinogradov's mean value theorem, Sidon sets}
\begin{document}
\begin{abstract}
The Prouhet--Tarry--Escott (PTE) problem has many generalizations and has
been studied in various algebraic domains. In this paper, we prove that finite
subsets $S$ of integral domains with small additive doubling constant (but
still a power of $|S|$) always contain solutions to Wright's generalization
of the PTE problem: there are small subsets $A$ and $B$ of the same size
such that $\sum_{a\in A} a^j=\sum_{b\in B} b^j$ for $1\le j\le k$, but not
for $j=k+1$. More generally, our method gives simultaneous solutions for
$m$ systems, with pairwise distinct $(k+1)$-th power sums. In contrast with
the classical case $S\subseteq [N]$, where the problem has been studied by
Wooley and others using Vinogradov's mean value theorem, our approach is based
on polynomial identities and additive properties of $S$. We also discuss
barriers to extending these results to broader settings.
\end{abstract}

\maketitle

\section{Introduction}

The Prouhet--Tarry--Escott (PTE) problem is that of finding two distinct, non-decreasing sequences of integers 
$a_1,a_2,\ldots,a_s$ and $b_1,b_2,\ldots,b_s$, such that
\begin{equation}\label{multigrade}
\sum_{i=1}^s a_i^j\ =\ \sum_{i=1}^s b_i^j \quad (1\leq j\leq k).
\end{equation}
The PTE problem has a long history and has been studied extensively. There are many well-studied generalizations and variants of the PTE problem. We refer to \cite{W48b, W96} for some historical notes as well as the survey \cite{BI94, B12}.

In this paper, we focus on variants of the PTE problem introduced by Wright. One variant of the PTE problem was introduced by Wright~\cite{W35}, where he looked for solutions to \eqref{multigrade} with the extra condition that $\sum_{i=1}^s a_i^{k+1}\neq \sum_{i=1}^s b_i^{k+1}$. In subsequent work \cite{W48,W48b}, Wright also considered nontrivial solutions to the simultaneous Diophantine equations
\begin{equation}\label{eq:multi_sys}
\sum_{i=1}^sx_{i1}^j = \sum_{i=1}^s x_{i2}^j = \cdots = \sum_{i=1}^s x_{im}^j\quad (1\leq j\leq k).
\end{equation}
Let $P(k,m)$ denote the least $s$ for which equation~\eqref{eq:multi_sys} has an integer solution $\mathbf{x}$ in which the sets $\{x_{1h},\ldots,x_{sh}\}(1\leq h\leq m)$ are distinct. Similarly, let $W(k,m)$ denote the least $s$ such that equation~\eqref{eq:multi_sys} has an integer solution $\mathbf{x}$ with $\sum_{i=1}^s x_{ih}^{k+1}\neq \sum_{i=1}^s x_{it}^{k+1}(h\neq t)$.

It is easy to see $P(k,2)\geq k+1$ and it is an open problem to determine if $P(k,2)=k+1$. It is only known that $P(k,2)=k+1$ when $2\leq k\leq 9$ and $k=11$ \cite{B12}. Using a pigeonhole principle argument one can easily see that $P(k,m)\leq \frac{k(k+1)}{2}+1$. However, the pigeonhole principle does not readily yield an upper bound on $W(k,2)$. Nonetheless, Hua \cite{H38, H49} and Wright \cite{W48b} were able to provide upper bounds on $W(k,m)$ using elementary arguments. 

By considering solutions to equation~\eqref{eq:multi_sys} with $x_{ih}\in [N] := \{1,2,\ldots,N\}$, one can see that estimating $W(k,m)$ is related to Vinogradov's mean value theorem. The best-known upper bound is $W(k,m)\ \leq\ \frac{k(k+1)}{2}+1$, due to Wooley \cite[Theorem 13.1]{W19}.

These results of Hua, Wright, and Wooley, as well as the work of Bourgain--Demeter--Guth \cite{BDG16}, leverage properties of dense subsets of integer intervals $[N] = \{1,2,\ldots,N\}$, as we will discuss in Section \ref{lit_section}.  
However, it is not immediately obvious how to extend these proofs to the case where the solutions are restricted to being contained in sparse subsets of the integers.  In this paper, we address this question for when those sparse subsets have some additive structure, specifically that they have ``small doubling constant" or ``high additive energy"; although our proof gives only an exponential-in-$k$ upper bound on $s$, the number of variables in each system.  

It is worth mentioning that the systems~\eqref{multigrade} and~\eqref{eq:multi_sys} make sense over any ring. In particular, there have been studies on the PTE problem in Gaussian integers, Eisenstein integers, finite fields, function fields, and number fields; see for example \cite{AT07, C13, CMSV24, KP14, P12, W19}. 

In this paper we will also study equation~\eqref{multigrade} over an arbitrary ring (not necessarily commutative) and equation~\eqref{eq:multi_sys} over an integral domain for technical reasons.  Recall that an \emph{integral domain} is a nonzero commutative ring with a multiplicative identity that has no zero divisors. 

\subsection{Main results}
We now state the main theorem of the paper.

\begin{theorem}\label{main_theorem} Let $k\ge 1$, $m\ge 2$ be two integers. There is a constant $c > 0$ depending on $k$ and $m$, such that the following holds. Suppose $R$ is a ring and $S\subseteq R$ is a nonempty finite subset satisfying 
$$
|S-S|\ =\ K |S|,\ {\rm where}\ K \leq c |S|^{1/(2^{k+1}-1)},
$$
then 
\begin{enumerate}[(1)]
\item There exist distinct $A_1, A_2,\ldots,A_m \subseteq S$, $|A_i| = \lceil\log_2m\rceil 2^k$, such that
$$
\sum_{a \in A_i} a^j\ =\ 
\sum_{b \in A_1} b^j \qquad {\rm for\ }j=1,\ldots,k,\ {\rm and\ } i=1,\ldots,m. 
$$
\item Assume additionally that $R$ is an integral domain with characteristic $\mathrm{char}(R)=0$ or $\mathrm{char}(R)>k+1$. Then 
there exist $A_1, A_2,\ldots,A_m \subseteq S$, $|A_i| = \lceil\log_2m\rceil 2^k$, such that
$$
\sum_{a \in A_i} a^j\ =\ 
\sum_{b \in A_1} b^j \qquad 
{\rm for\ }j=1,\ldots,k,\ {\rm and\ } i=1,\ldots,m,
$$
while for all $h,t = 1,2,\ldots, m,\ h\neq t$, we have
$$
\sum_{a\in A_h} a^{k+1}\ \neq\ \sum_{b \in A_t} b^{k+1}.
$$
\end{enumerate}
\end{theorem}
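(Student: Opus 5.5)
The plan is to combine a Prouhet--Thue--Morse type identity for Hilbert cubes with a Hilbert cube lemma for sets of small doubling. \textbf{Step 1 (identity).} For $x,d_1,\dots,d_{k+1}\in R$ and $\epsilon\in\{0,1\}^{k+1}$, write $x_\epsilon = x+\sum_i \epsilon_i d_i$. I claim
$$\sum_{\epsilon}(-1)^{|\epsilon|}x_\epsilon^j=0 \quad (1\le j\le k).$$
Expanding $x_\epsilon^j$ as a sum of noncommutative words in $x,d_1,\dots,d_{k+1}$, a word involving only the $d_i$ with $i\in I$ appears with coefficient $\prod_{i\in I}\epsilon_i$. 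The alternating sum over $\epsilon_i$ for any $i\notin I$ then kills it. Every word of length $j\le k$ misses some $d_i$, so the identity holds in any ring. For $j=k+1$ only words using each $d_i$ exactly once survive, giving $\pm\sum_{\pi\in S_{k+1}} d_{\pi(1)}\cdots d_{\pi(k+1)}$. In a commutative ring this is $\pm (k+1)!\,d_1\cdots d_{k+1}$.

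So if the $2^{k+1}$ vertices $x_\epsilon$ are distinct and lie in $S$, the even and odd halves $E,O$ are disjoint sets of size $2^k$ with equal $j$-th power sums for $j\le k$. In case (2), their $(k+1)$-th power sums differ by $\delta=\pm(k+1)!\,d_1\cdots d_{k+1}$. This is nonzero when the $d_i\neq 0$, because $R$ is a domain and $\mathrm{char}(R)=0$ or $>k+1$.

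\textbf{Step 2 (Hilbert cube lemma).} I would show that $S$ contains such a nondegenerate cube by iterated popular differences. Set $S_0=S$, and given $S_i$ choose $d_{i+1}\neq 0$ with $S_{i+1}=S_i\cap(S_i-d_{i+1})$ large. By Cauchy--Schwarz, averaging over the $\le K|S|$ differences in $S_i-S_i\subseteq S-S$ gives
$$|S_{i+1}|\gtrsim |S_i|^2/(K|S|),$$
after discarding $d=0$ and a bounded number $O_{k,m}(1)$ of forbidden values. Induction yields $|S_{k+1}|\gg_{k,m}|S|/K^{2^{k+1}-1}$, which is $\ge$ a large constant when $K\le c|S|^{1/(2^{k+1}-1)}$ with $c$ small. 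This is exactly the exponent in the hypothesis.

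The forbidden values come from three requirements:
\begin{itemize}
\item Distinctness of vertices means no nontrivial signed subset sum $\sum_i \eta_i d_i$ with $\eta_i\in\{-1,0,1\}$ vanishes. Each constraint excludes at most one value of the new $d_{i+1}$.
\item Disjointness from earlier cubes means the base point $x\in S_{k+1}$ avoids $O(1)$ previously used elements, which is fine since $|S_{k+1}|$ is large.
\item Conditions in the domain case, described in Step 3.
\end{itemize}
The forbidden set has size $O_{k,m}(1)$, while the popular differences carry most of the mass. One needs care that excluding $O(1)$ values of $d$ loses only $O(|S_i|)$ pairs. This is the step I expect to be the main technical obstacle: the bookkeeping must keep the loss negligible, so that the recursion still has exponent $2^{k+1}-1$.

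\textbf{Step 3 (many systems).} Let $r=\lceil\log_2 m\rceil$ and build $r$ pairwise disjoint cubes $(E_l,O_l)$ successively. For $\sigma\in\{0,1\}^r$ let $A_\sigma=\bigcup_l (\sigma_l?O_l:E_l)$. These are $2^r\ge m$ distinct sets of size $r2^k$, all with the same $j$-th power sums for $j\le k$; choosing any $m$ of them gives (1).

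For (2), the $(k+1)$-th power sum of $A_\sigma$ is $C+\sum_l \sigma_l\delta_l$. It therefore suffices that $\delta_1,\dots,\delta_r$ be dissociated, meaning all $2^r$ subset sums are distinct. When building cube $l$, the condition is that $\delta_l$ avoid the $\le 3^{l-1}$ values $\sum_{i<l}\eta_i\delta_i$ with $\eta_i\in\{-1,0,1\}$. Since $\delta_l=\pm(k+1)!\,D'\,d_{k+1}$ with $D'=d_1\cdots d_k\neq 0$ fixed before $d_{k+1}$ is chosen, cancellation in the domain shows each forbidden value excludes at most one $d_{k+1}$. These are among the $O_{k,m}(1)$ exclusions handled in Step 2, completing the proof.
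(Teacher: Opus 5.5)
Your proposal is correct and is essentially the paper's proof. It uses the same iterated popular-difference construction of Hilbert cubes and the same alternating-sign identity on the cube's vertices. It also uses the same $\{0,1\}^{r}$ gluing, with the last generator $d_{k+1}$ chosen so that the $(k+1)$-th power differences are dissociated, by cancelling $(k+1)!\,d_1\cdots d_k\neq 0$ in the domain.

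Your deviations are only bookkeeping. You bound $|S_i-S_i|\le |S-S|$ directly instead of tracking $K_{i+1}\le 2K_i^2$. You get disjointness by steering the base point rather than by partitioning $S$ into $r$ pieces. For that, note the base point must avoid $U-H_0(d_1,\ldots,d_{k+1})$, where $U$ is the set of already-used elements, not merely $U$ itself. This is still only $O_{k,m}(1)$ values, so the argument goes through unchanged.
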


Theorem~\ref{main_theorem} can be applied in a wide range of settings, for example generalized arithmetic progressions, finite fields with large characteristic, polynomials over finite fields with bounded degree, and algebraic integers with bounded norm.

A special case of Theorem~\ref{main_theorem} is the following, which is formulated in a slightly different way.
\begin{theorem}\label{main_theorem_variant}
    Let $k$ be a positive integer. Suppose $R$ is an integral domain with characteristic $\mathrm{char}(R)=0$ or $\mathrm{char}(R)>k+1$. Then there is a positive integer $N = N(k)$ such that for any finite subset $T\subseteq R$ with $|T|\geq N$ and any $S\subseteq T$ of size $|S|\geq (24|T-T|)^{1-1/2^{k+1}}$, there exist $A, B \subseteq S$, $|A| = |B| = 2^k$, such that
$$
\sum_{a \in A} a^j\ =\ 
\sum_{b \in B} b^j,\ {\rm for\ }j=1,\ldots,k,
$$
while 
$$
\sum_{a\in A} a^{k+1}\ \neq\ \sum_{b \in B} b^{k+1}.
$$
\end{theorem}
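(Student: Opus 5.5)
Theorem~\ref{main_theorem_variant} is the case $m=2$ of Theorem~\ref{main_theorem}(2) (where $\lceil\log_2 2\rceil 2^k=2^k$), but with the hypothesis stated in terms of a superset $T$ rather than $|S-S|$. So I would not repeat the polynomial-identity argument. Instead I would check that the hypotheses of Theorem~\ref{main_theorem}(2), or rather of the argument behind it, hold for $S$ once $|T|\ge N(k)$.

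The first step is to pass from $T$ to $S$. Since $S\subseteq T$ we have $S-S\subseteq T-T$, so
\[
|S-S|\le |T-T|\le \tfrac{1}{24}|S|^{2^{k+1}/(2^{k+1}-1)} .
\]
Writing $|S-S|=K|S|$ gives $K\le \frac{1}{24}|S|^{1/(2^{k+1}-1)}$. This is exactly the shape of the hypothesis of Theorem~\ref{main_theorem} with $c=1/24$.

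The obstacle is that the constant $c=c(k,m)$ in Theorem~\ref{main_theorem} is not specified, so $1/24$ might be larger than the allowed $c(k,2)$. There are two ways to handle this. If the proof of Theorem~\ref{main_theorem} (the iterated cube construction) actually only needs $K\le c_0|S|^{1/(2^{k+1}-1)}$ with an explicit $c_0$ once $|S|$ is large, then the value $24$ has presumably been chosen to match that requirement. In that case I would retrace the proof with $c=1/24$. Each doubling step counts pairs of differences $d$ with many representations $d=a-a'$, via Cauchy--Schwarz or pigeonhole. Repeating this $k+1$ times costs the exponent $1-1/2^{k+1}$ and a constant that I expect to be absorbed by $24$ and by $|S|$ being large. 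The other lever is to use the slack from $|T|\ge N$. Note that $|T-T|\ge |T|\ge N$, so $|S|\ge (24N)^{1-1/2^{k+1}}$ is large. Any lower-order losses, such as needing distinct elements or removing degenerate configurations where the $2^k$ elements of a cube fail to be distinct, can then be absorbed by taking $N(k)$ large.

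Concretely, the construction should produce differences $d_1,\dots,d_{k+1}$ and a base point $x$ such that all $2^{k+1}$ points $x+\sum_{i\in I}d_i$ lie in $S$ and are distinct. Split the cube by the parity of $|I|$ to get $A$ and $B$. The standard identity $\sum_{I}(-1)^{|I|}(x+\sum_{i\in I} d_i)^j=0$ for $j\le k$ gives the equal power sums. For $j=k+1$ the same alternating sum equals $(-1)^{k+1}(k+1)!\,d_1\cdots d_{k+1}$. This is nonzero because $R$ is an integral domain, each $d_i\neq 0$, and $\mathrm{char}(R)=0$ or $\mathrm{char}(R)>k+1$, so $(k+1)!\ne 0$. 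Hence the $(k+1)$-th power sums of $A$ and $B$ differ, and $|A|=|B|=2^k$.
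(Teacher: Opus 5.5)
Your route is the paper's. Theorem~\ref{main_theorem_variant} is Theorem~\ref{main_theorem}(2) with $m=2$ (so $\ell=1$): Lemma~\ref{lem:cube} with $d=k+1$, followed by the parity identity of Lemma~\ref{lem:parity_identity}. Your reduction $|S-S|\le|T-T|\Rightarrow K\le\frac1{24}|S|^{1/(2^{k+1}-1)}$ is correct, and so is your nonvanishing argument for the $(k+1)$-th power sums.

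What is missing is the one quantitative step, which you leave as an expectation ("presumably", "I expect"). It is immediate. With this bound on $K$,
\[
3^{k+2}(2K)^{2^{k+1}-1}\ \le\ 3^{k+2}\,12^{-(2^{k+1}-1)}\,|S|\ \le\ |S|,
\]
because $12^{2^{k+1}-1}\ge 12^{k+1}\ge 3^{k+2}$. So Lemma~\ref{lem:cube} applies directly and gives a proper $(k+1)$-dimensional Hilbert cube in $S$ with nonzero generators. Equivalently, the explicit constant in the proof of Theorem~\ref{main_theorem}(2) with $\ell=1$ is $c=\bigl(4[2(3^{k+2}+3)]^{1/(2^{k+1}-1)}\bigr)^{-1}>1/16>1/24$. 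For $\ell=1$ the extra condition \eqref{eq:gener_relation} is just $\prod_j\alpha_{1,j}\neq 0$, which is automatic in an integral domain, so nothing beyond the cube lemma is needed.

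Your second lever, taking $|T|\ge N(k)$ so that $|S|$ is large, would not close this step. At the extremal value $K=\frac1{24}|S|^{1/(2^{k+1}-1)}$, both sides of the hypothesis of Lemma~\ref{lem:cube} are proportional to $|S|$, so the size of $S$ gives no help. It is the constant $24$ that has to carry the inequality, and it does so with room to spare.
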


Using Pl\"unnecke--Ruzsa inequality (see for example~\cite{Pe12}), when $|S+S|\leq K'|S|$, we have $|S-S|\leq K'^2|S|$. This allows us to deduce a version of Theorem~\ref{main_theorem} for sets with small doubling constant. We can also deduce a corollary for sets with large additive energy using the best quantitative version of the Balog--Szemer\'edi--Gowers theorem~\cite{gowers} by Reiher--Schoen~\cite{RS24}:
\begin{corollary}  
The conclusion of Theorem~\ref{main_theorem} remains valid if the hypothesis
on \(|S-S|\) is replaced by the following additive-energy hypothesis:
$$
{\rm E}(S)\ :=\ \#\{a_1, a_2, b_1, b_2 \in S\ :\ 
a_1 + a_2 = b_1 + b_2\}\ >\ c' |S|^{3 - \frac{1}{5(2^{k+1}-1)}},
$$
where $c' > 0$ is some constant depending only on $k$ and $m$.
\end{corollary}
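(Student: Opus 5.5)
The plan is to reduce the energy hypothesis to the doubling hypothesis of Theorem~\ref{main_theorem}, applied to a large structured subset $S'\subseteq S$. Any solution found inside $S'$ is automatically a solution inside $S$, so this suffices. Write $N=|S|$ and $E(S)=N^3/L$, where $L< N^{1/(5(2^{k+1}-1))}/c'$.

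First, invoke the Reiher--Schoen form of the Balog--Szemer\'edi--Gowers theorem. For $E(S)=N^3/L$, it gives $S'\subseteq S$ with
\[
|S'|\ \gg\ N/L \qquad\text{and}\qquad |S'-S'|\ \ll\ L^{4}|S'|.
\]
We record only that the doubling exponent is at most $4$ and the density loss is at most $L$ (polynomially in $L$). The numerology depends on the exact exponents in \cite{RS24}: with $|S'|\gg N/L$ and $K\ll L^4$, the required condition $K\le c|S'|^{1/(2^{k+1}-1)}$ becomes
\[
L^{4}\ \lesssim\ (N/L)^{1/(2^{k+1}-1)}.
\]
Setting $D=2^{k+1}-1$, this reads $L^{4+1/D}\lesssim N^{1/D}$. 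It holds once $L\le c'' N^{1/(5D)}$, since $4+1/D\le 5$.

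Second, choose $c'$ large enough in terms of the implicit constants of \cite{RS24} and the constant $c=c(k,m)$ from Theorem~\ref{main_theorem}. Then $K=|S'-S'|/|S'|\le c|S'|^{1/D}$. The theorem requires exact equality $|S-S|=K|S|$, but this is just the definition of $K$, so nothing is lost there.

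Third, handle the degenerate cases. If $N$ is small, the energy bound $E(S)\le N^3$ makes the hypothesis impossible unless $c'$ is small relative to $N$. Choosing $c'\ge 1$ forces $N$ to be large. It also forces $|S'|\to\infty$, so $S'$ is nonempty and has the size needed for the required $m$ subsets. Theorem~\ref{main_theorem} is then applied to $S'$ in the same ring $R$, giving both parts (1) and (2), since the ring-theoretic hypotheses are unchanged.

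The main obstacle is matching the exact exponents of \cite{RS24}, in particular whether its guarantee is $|S'|\gg N/L$ with $|S'-S'|\ll L^4|S'|$, or a different pair of bounds. The exponent $5$ in the corollary suggests exactly this total polynomial loss of $L^{4}\cdot L$. If the theorem is stated instead in terms of $|S'+S'|$, I would apply Pl\"unnecke--Ruzsa to pass to $|S'-S'|$; that costs squaring, and the constant $5$ would then need rechecking.
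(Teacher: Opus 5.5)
Your proposal is correct and follows the paper's route: apply the Reiher--Schoen form of Balog--Szemer\'edi--Gowers to pass to a subset $S'\subseteq S$ with small difference set, then apply Theorem~\ref{main_theorem} to $S'$. For the record, \cite{RS24} gives $|S'|\ge(1-\eps)L^{-1/2}|S|$ and $|S'-S'|\ll_\eps L^{4}|S'|$ directly for differences. That density bound is stronger than the one you assumed, so your numerology goes through: with $D=2^{k+1}-1$ and $L\ge 1$ from $E(S)\le|S|^3$, one has $L^{4+1/(2D)}\le L^{5}$, and no Pl\"unnecke step is needed.
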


We briefly outline the proof of Theorem~\ref{main_theorem} here. By exploiting the additive properties of $S$, we first find several disjoint large Hilbert cubes in $S$, whose generators satisfy certain algebraic constraints. Then we use polynomial identities to construct solutions based on these Hilbert cubes.

In the classical setting when $R=\Z$ and $T = [N]$, Theorem~\ref{main_theorem_variant} allows us to find a solution to the system of equations with $2^{k+1}$ variables in any subset $S\subseteq [N]$ with $|S|\gg_k N^{1-1/(2^{k+1}-1)}$. As shown by Wooley and others, using Vinogradov's mean value theorem, the bounds on $|S|$ and $|A|,|B|$ can be improved. We refer to Corollary~\ref{wooley_corollary} for a general version.   In contrast, we do not have such powerful tools to estimate the number of solutions to equations~\eqref{multigrade} and~\eqref{eq:multi_sys} in an arbitrary subset of a general ring.

In a different direction, one can ask about lower bounds. For example, given some integers $s$ and $k$, how large must $S \subseteq [N]$ be in order to guarantee that sets $A,B \subseteq S$ with $|A| = |B| \leq s$ exist, such that equation~\eqref{multigrade} holds, but
that the sums do not equal when $j=k+1$?  And what about subsets of general rings? These questions are partially addressed in Section \ref{lower_bound_section}, though the bounds produced are very far from the upper-bound results appearing in Theorem \ref{main_theorem} and Corollary \ref{wooley_corollary} below. We also remark that there might be a way to employ polynomial methods to produce lower bounds. For example, Borwein and Mossinghoff~\cite{BM00} connected a variant of the PTE problem to a question about polynomials with high-order vanishing at $1$.
\medskip

\noindent {\bf Some Additional Remarks:}  
\begin{itemize}
\item First, the assumption on the characteristic of $R$ in Theorem~\ref{main_theorem}(2) cannot be omitted entirely. To see this, consider the case where $R$ has characteristic $p$ and $p \mid (k+1)$. In this case we would have
$$
\sum_{a\in A} a^{k+1} = \left ( \sum_{a\in A} a^{(k+1)/p} \right )^{p} = \left ( \sum_{b \in B} b^{(k+1)/p} \right )^{p} = 
\sum_{b \in B} b^{k+1}.
$$
Thus, the conclusion in Theorem~\ref{main_theorem}(2) cannot hold.

\item Second, we will show in Proposition \ref{propAB} that improving the $2^k$ for the size of the sets $A$ and $B$ in Theorem~\ref{main_theorem_variant} to something closer to $k^2$ would require a different method than ours, or at least would require adding substantial new ingredients.  

\item Third, as we will see in the remark following Corollary \ref{wooley_corollary} below, one cannot, in general, prove such a corollary just assuming the hypotheses of Theorem \ref{main_theorem}.  That is, the ability to pick and choose the exponents $j$ where the 
$\sum_{a \in A} a^j \neq \sum_{b \in B} b^j$ 
is quite limited.  
\end{itemize}

\subsection{Results from the literature and some further directions} \label{lit_section}
Given two positive integers $s,k$, let $J_{s,k}(N)$ denote the number of solutions to equation~\eqref{multigrade} with $a_i,b_i\in [N]$. Then we know
\[J_{s,k}(N) = \int_{\mathbb{T}^k}\bigg|\sum_{1\leq n\leq N}e(\alpha_1n+\cdots+\alpha_kn^k)\bigg|^{2s}d\alpha_1\cdots d\alpha_k,\]
where $\mathbb{T} = \R/\Z$.
Recall that Vinogradov's mean value theorem provides an almost sharp upper bound 
\begin{equation}
J_{s,k}(N)\ll_{s,k,\eps} N^\eps(N^s+N^{2s-k(k+1)/2}).
\end{equation}
In the famous work of Bourgain--Demeter--Guth~\cite{BDG16} and Wooley~\cite{W12, W19}, they established Vinogradov's mean value theorem together with some generalizations. We refer to the survey by Pierce \cite{P19} for further discussion.

In \cite[Corollary 1.4]{W19} Wooley proved the following generalization of Vinogradov's mean value theorem:
 \begin{theorem}[Wooley]\label{thm:Vinogradov}
    Let $s,k\in \N$, $\eps>0$, $(a_n)_{n\in \Z}$ be a sequence of complex numbers. Then 
    \begin{align*}
    &\int_{\mathbb{T}^k}\bigg|\sum_{|n|\leq N}a_ne(\alpha_1n+\cdots+\alpha_kn^k)\bigg|^{2s}d\alpha_1\cdots d\alpha_k \nonumber \\
    &\ll_{s,k,\eps} N^\eps(1+N^{s-k(k+1)/2})\bigg(\sum_{|n|\leq N}|a_n|^2\bigg)^s.    
    \end{align*}
\end{theorem}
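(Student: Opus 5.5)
This is Wooley's result \cite[Corollary 1.4]{W19}. We only need it as a black box, but here is how we would derive it. The idea is to reduce everything to the critical exponent $2s=k(k+1)$ and then move to other $s$ by elementary interpolation. Write
$$
f(\bm\alpha)=\sum_{|n|\le N}a_n e(\alpha_1 n+\cdots+\alpha_k n^k),\qquad \|a\|_2^2=\sum_{|n|\le N}|a_n|^2 .
$$
The key input is the $\ell^2$ decoupling inequality for the moment curve of Bourgain--Demeter--Guth \cite{BDG16}, valid for $2\le p\le k(k+1)$.

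\textbf{Step 1: the critical exponent.} The aim is
$$
\int_{\mathbb{T}^k}|f|^{k(k+1)}\ll_{k,\eps}N^{\eps}\|a\|_2^{k(k+1)}.
$$
\begin{itemize}
\item Rescale $n=Nt$ with $t\in[-1,1]$. The frequency point $(n,\dots,n^k)$ then becomes a point on the moment curve $(t,t^2,\dots,t^k)$, after an anisotropic dilation of the variables $\alpha_j$.
\item Partition $[-1,1]$ into intervals of length $1/N$, so that each cap $\theta$ carries a single frequency $n$. The corresponding piece is $f_\theta=a_ne(\cdot)$, whose $L^p$ norm over any ball is exactly $|a_n|$ times a power of the ball's volume.
\item Apply the BDG inequality at $p=k(k+1)$ on a ball of radius $\sim N^k$ in the rescaled variables.
\item By periodicity of $f$ in each $\alpha_j$, the average over the large ball equals the integral over $\mathbb{T}^k$. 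The weights $w_B$ cost only constants.
\item The right-hand side $\big(\sum_\theta\|f_\theta\|_{L^p(w_B)}^2\big)^{1/2}$ then collapses to $\|a\|_2$ times the same volume normalisation.
\end{itemize}
Arbitrary complex coefficients $a_n$ cause no difficulty, because decoupling is insensitive to the coefficients on individual caps.

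\textbf{Step 2: the range $s\le k(k+1)/2$.} Since $k\ge1$, distinct $n$ give distinct frequency vectors, so Parseval gives $\int_{\mathbb{T}^k}|f|^2=\|a\|_2^2$. For $1\le s\le k(k+1)/2$, log-convexity of $L^p$ norms (H\"older's inequality) interpolates between $L^2$ and $L^{k(k+1)}$. This yields $\int|f|^{2s}\ll N^{\eps}\|a\|_2^{2s}$, which is the claimed bound because $1+N^{s-k(k+1)/2}\asymp1$ in this range.

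\textbf{Step 3: the range $s>k(k+1)/2$.} Here use the trivial sup bound: by Cauchy--Schwarz,
$$
\sup|f|\le(2N+1)^{1/2}\|a\|_2 .
$$
Therefore
$$
\int|f|^{2s}\le(\sup|f|)^{2s-k(k+1)}\int|f|^{k(k+1)}\ll N^{\eps}N^{s-k(k+1)/2}\|a\|_2^{2s}.
$$

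\textbf{Main obstacle.} All the depth is in Step 1, namely the sharp decoupling inequality (or, alternatively, Wooley's nested efficient congruencing). In a self-contained account we would cite it rather than reprove it. The only care needed there is the bookkeeping of the rescaling and the passage from weighted balls to the torus, which is routine.
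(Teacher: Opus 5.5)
Your derivation is correct, but it takes a different route from the paper. The paper gives no argument of its own: it imports the bound from Wooley \cite[Corollary~1.4]{W19}, where the weighted (discrete restriction) estimate comes out of nested efficient congruencing, with the coefficients $a_n$ carried through that machinery. You instead place all of the depth in the Bourgain--Demeter--Guth $\ell^2$ decoupling inequality \cite{BDG16} at the single critical exponent $p=k(k+1)$. Every other $s$ is then reached by elementary means: H\"older on the probability space $\mathbb{T}^k$ below the critical exponent, and the trivial bound $\sup|f|\le(2N+1)^{1/2}\|a\|_2$ above it. This is essentially the standard deduction of discrete restriction from decoupling.

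The citation buys brevity and exactly the stated form. Your route makes transparent why arbitrary complex weights cost nothing: each $1/N$-cap carries a single frequency, so the decoupled side is literally $\|a\|_2$ up to volume factors. It also shows that only the endpoint $s=k(k+1)/2$ needs deep input.

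Some minor bookkeeping points:
\begin{enumerate}[(1)]
\item In Step 2, Parseval is unnecessary, since $\|f\|_{L^{2s}(\mathbb{T}^k)}\le\|f\|_{L^{k(k+1)}(\mathbb{T}^k)}$ when $2s\le k(k+1)$.
\item Decoupling is usually stated for $t\in[0,1]$. The range $|n|\le N$ is handled by splitting into $n\ge0$ and $n<0$ and using the measure-preserving substitution $\alpha_j\mapsto(-1)^j\alpha_j$.
\item The ball-to-torus step can be made an identity rather than a comparison. Average over a cube of side $N^k$ in the rescaled variables; this is a union of full period cells, since the period in $x_j$ is $N^j$. Then cover the cube by $O_k(1)$ balls of radius $N^k$.
\end{enumerate}
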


Following a simple counting argument (see for example \cite[Section 8]{WIMRN}), one can deduce the following corollary of Theorem~\ref{thm:Vinogradov}.

\begin{corollary}\label{cor:Vinogradov_in_J}
    Suppose that $t \geq 1$ and that $k_1,\ldots,k_t$ are positive integers with $1 \leq k_1 < k_2 < \cdots < k_t = k$.  Let $s \geq k(k+1)/2$, and write $K = k_1 + \cdots + k_t$.  Then for any $\eps > 0$, and any sequence $c_{-N},\ldots, c_{N} \in {\mathbb C}$, one has
\begin{align}\label{integral_eqn}
    &\int_{\mathbb{T}^t}\bigg|\sum_{|n| \leq N}
    c_n e(\alpha_1 n^{k_1} + \cdots + \alpha_t n^{k_t})\bigg|^{2s} d\alpha_1 \cdots d\alpha_t  \ll_{s,k,\eps} N^{s-K+\eps} \left ( \sum_{|n| \leq N} |c_n|^2 \right )^s.
\end{align}
\end{corollary}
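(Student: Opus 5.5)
The plan is to deduce the bound from Theorem~\ref{thm:Vinogradov} by a standard counting argument: complete the missing exponents by Fourier orthogonality, and pay for them by counting the possible values of the corresponding power sums. Let $L=\{1,\ldots,k\}\setminus\{k_1,\ldots,k_t\}$. Then $\sum_{l\in L} l = k(k+1)/2-K$. Put
$$f(\boldsymbol\alpha)=\sum_{|n|\le N}c_n e(\alpha_1 n+\cdots+\alpha_k n^k),$$
a function on $\mathbb{T}^k$. We regard the phase in \eqref{integral_eqn} as the restriction of $f$ obtained by setting $\alpha_l=0$ for $l\in L$ (after relabelling the coordinates $\alpha_{k_j}$ as $\alpha_j$).

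\textbf{Step 1: expand the integral.} Expanding the $2s$-th power and integrating over $\mathbb{T}^t$ gives
$$I:=\sum_{\mathbf n,\mathbf m}\Big(\prod_{i=1}^s c_{n_i}\overline{c_{m_i}}\Big)\,\mathbbm{1}\big[D_{k_j}(\mathbf n,\mathbf m)=0 \ \ (1\le j\le t)\big],$$
where $D_l(\mathbf n,\mathbf m)=\sum_i (n_i^l-m_i^l)$. For $l\in L$ the integer $D_l$ satisfies $|D_l|\le 2sN^l$.

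\textbf{Step 2: insert the missing frequencies.} For each $l\in L$ we use the identity
$$1=\sum_{|h_l|\le 2sN^l}\int_{\mathbb{T}} e\big(\beta_l(D_l-h_l)\big)\,d\beta_l .$$
Inserting this for every $l\in L$ and collecting terms, we obtain
$$I=\sum_{\mathbf h}\int_{\mathbb{T}^k}|f(\boldsymbol\alpha)|^{2s}\,e\Big(-\sum_{l\in L}\alpha_l h_l\Big)\,d\boldsymbol\alpha ,$$
where the coordinates $\alpha_l$ ($l\in L$) play the role of the $\beta_l$. The triangle inequality then gives
$$|I|\le \#\{\mathbf h\}\int_{\mathbb{T}^k}|f|^{2s}\,d\boldsymbol\alpha .$$

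\textbf{Step 3: count and apply Wooley.} The number of admissible $\mathbf h$ is
$$\prod_{l\in L}(4sN^l+1)\ll_{s,k} N^{k(k+1)/2-K}.$$
Theorem~\ref{thm:Vinogradov} gives
$$\int_{\mathbb{T}^k}|f|^{2s}\ll_{s,k,\eps} N^{\eps}\big(1+N^{s-k(k+1)/2}\big)\Big(\sum|c_n|^2\Big)^s .$$
Since $s\ge k(k+1)/2$, we have $1+N^{s-k(k+1)/2}\le 2N^{s-k(k+1)/2}$. Multiplying the two bounds yields
$$N^{s-K+\eps}\Big(\sum_{|n|\le N}|c_n|^2\Big)^s,$$
which is \eqref{integral_eqn}.

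The only delicate point is Step 2. The coefficients $c_n$ are complex, so the expanded sum is not termwise nonnegative. However, each twisted integral $\int|f|^{2s}e(-\boldsymbol\alpha\cdot\mathbf h)$ is bounded in modulus by the untwisted integral $\int|f|^{2s}$. So no positivity of the terms is needed, only this pointwise bound on each $\mathbf h$-summand.
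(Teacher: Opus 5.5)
Your proposal is correct and is essentially the argument the paper has in mind. The paper gives no written proof; it only invokes a ``simple counting argument'' and cites Wooley for it. Your proof is exactly that argument: complete the system with the missing exponents $l\in L$, pay a factor $\ll_{s,k} N^{\sum_{l\in L} l}=N^{k(k+1)/2-K}$ for the possible values of those power sums, and apply Theorem~\ref{thm:Vinogradov} using $s\ge k(k+1)/2$.
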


As an application, Corollary~\ref{cor:Vinogradov_in_J} will imply the following result (proved in Section~\ref{sec:cor}), providing bounds for a generalization of the function $W(k,m)$. In particular, it generalizes a result of Wooley \cite[Theorem 13.1]{W19}.

\begin{corollary}\label{wooley_corollary}  For every $k \geq 1$ and every proper subset $J \subset \{1,2,\ldots,k\}$, if $N \geq N_0(k)$, then for any integer $m\geq 2$ and $S \subseteq [N]$ satisfying 
$|S| \geq m^{2/k(k+1)}N^{1-2/(k^2+k+1)}$, there exist $A_1,A_2,\ldots,A_m \subseteq S$,
each of size 
$$
s\ =\ k(k+1)/2,
$$
such that
$$
\sum_{a \in A_i} a^j\ =\ \sum_{b \in A_1} b^j,\ {\rm for\ }j \in J,\ {\rm and}\ i = 1,\ldots,m.
$$
while for any $1\leq h < t\leq m$, 
$$
\sum_{a\in A_h}a^{j}\neq \sum_{b\in A_t}b^j,\ {\rm for}\ j \in \{1,2,\ldots,k\}\setminus J.
$$
\end{corollary}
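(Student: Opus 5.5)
We prove Corollary~\ref{wooley_corollary} by a second-moment (Cauchy--Schwarz) count of solutions, using Corollary~\ref{cor:Vinogradov_in_J} to bound the number of ``bad'' coincidences. Write $J=\{k_1<\cdots<k_t\}$ and let $J'=\{1,\ldots,k\}\setminus J$, which is nonempty. For each $s$-subset $A\subseteq S$, with $s=k(k+1)/2$, consider the vector of power sums $\sigma_J(A)=(\sum_{a\in A}a^j)_{j\in J}$. These vectors take at most $\prod_{j\in J}(sN^{j})\ll N^{K}$ values, where $K=\sum_{j\in J}j$. We want a single value $v$ whose fibre $\mathcal F_v=\{A:\sigma_J(A)=v\}$ contains $m$ sets with pairwise distinct $\sigma_{J'}$. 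Equivalently, $\mathcal F_v$ must not be covered by $m-1$ classes of the finer map $\sigma_{\{1,\ldots,k\}}$.

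First, the count of sets. The number of $s$-subsets of $S$ is $\binom{|S|}{s}\gg |S|^s$ for $N$ large. By pigeonhole, or by Cauchy--Schwarz, $\sum_v|\mathcal F_v|^2\ge \binom{|S|}{s}^2/(C N^{K})$.

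Next, the upper bound on full coincidences. The number of pairs $(A,B)$ with all $k$ power sums equal is at most $J_{s,k}$ restricted to $S$. By Theorem~\ref{thm:Vinogradov} with $a_n=1_S(n)$ and $s=k(k+1)/2$ (the critical case), this is $\ll N^{\eps}|S|^s$; with the diagonal removed it is at most $\ll N^{\eps}|S|^{s}$. Call this $D$.

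Now suppose every fibre $\mathcal F_v$ is covered by at most $m-1$ classes of full power-sum vectors. Within a fibre, Cauchy--Schwarz over these classes gives $|\mathcal F_v|^2\le (m-1)\cdot\#\{(A,B)\in\mathcal F_v^2: \sigma_{\{1..k\}}(A)=\sigma_{\{1..k\}}(B)\}$. Summing over $v$ yields
\[
|S|^{2s}N^{-K}\ll m\,D\ll m N^{\eps}|S|^{s}.
\]
This inequality forces $|S|^{s}\ll mN^{K+\eps}$. Since $J$ is a proper subset, $K\le s-1$, so $|S|\ll m^{1/s}N^{1-1/s+\eps'}$. The stated hypothesis $|S|\ge m^{2/k(k+1)}N^{1-2/(k^2+k+1)}$ contradicts this for $N\ge N_0(k)$, because $1-2/(k^2+k+1)>1-2/(k^2+k)$ leaves room to absorb $N^{\eps}$ and the implied constants. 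Some fibre therefore meets at least $m$ distinct full classes, and choosing one set from each of $m$ of them gives the required $A_1,\ldots,A_m$. Distinct full vectors with equal $J$-coordinates already differ in some coordinate of $J'$, but we need every $j\in J'$ to separate every pair.

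The hard part is this strengthening, and we handle it with the genuinely weighted Corollary~\ref{cor:Vinogradov_in_J}. For each $j_0\in J'$, bound the number of pairs agreeing on $J\cup\{j_0\}$ by applying Corollary~\ref{cor:Vinogradov_in_J} to the exponent set $J\cup\{j_0\}$ with $c_n=1_S(n)$. This gives $\ll N^{s-K-j_0+\eps}|S|^s\le N^{s-K-1+\eps}|S|^s$. Now partition $\mathcal F_v$ greedily: pick sets one at a time, discarding those that agree with an already-chosen set in some $j_0\in J'$. If the greedy procedure stops before $m$ sets in every fibre, each fibre is covered by at most $(m-1)|J'|$ ``agreement classes''. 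The same Cauchy--Schwarz yields
\[
|S|^{2s}N^{-K}\ll mk\,N^{s-K-1+\eps}|S|^s,
\]
that is, $|S|^s\ll mkN^{s-1+\eps}$. This is again contradicted by the hypothesis, with the exponent comparison $1-1/s$ versus $1-2/(k^2+k+1)$. The care needed is the covering step: agreement on a single $j_0$ is not an equivalence relation across different $j_0$. We therefore cover by the union over $j_0\in J'$ of the classes of $\sigma_{J\cup\{j_0\}}$ containing the chosen sets, and apply Cauchy--Schwarz to each of the $|J'|$ families separately.
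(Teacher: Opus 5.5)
Your argument is correct. It reaches the paper's key inequality, $|S|^s\ll_{k,\varepsilon} mN^{s-1+\varepsilon}$, from the same skeleton:
\begin{itemize}
\item fibre the $s$-sets over their $J$-power sums;
\item get $\sum_v|\mathcal F_v|^2\gg|S|^{2s}N^{-K}$ by Cauchy--Schwarz;
\item take a maximal subfamily of a fibre that is separated in every coordinate of $J'$.
\end{itemize}

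The difference is in how the upper bound is produced. The paper applies Cauchy--Schwarz in value space, $r_J(\vec z)^2\le|P_{\vec z}|\sum r_{[k]}^2$. It then bounds the number $|P_{\vec z}|$ of full power-sum vectors in a fibre by Lemma~\ref{lem:coordinate-packing}, which is a pure box count losing $N^{\sum_{j\in J^\sharp}j-h}$. After that it needs only the complete Vinogradov bound $G_{[k]}(S)\ll N^{\varepsilon}|S|^s$.

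You instead cover each fibre by at most $(m-1)|J'|$ agreement classes of sets. You then bound the pair count for each $j_0$ directly by Corollary~\ref{cor:Vinogradov_in_J} for the incomplete system $J\cup\{j_0\}$. This is legitimate, since $s=k(k+1)/2\ge k^*(k^*+1)/2$ with $k^*=\max(J\cup\{j_0\})$.

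That corollary is itself deduced from the complete bound by exactly this box count over the missing power sums. So the two routes are equivalent in substance. Yours absorbs the combinatorial step into the analytic input and never passes to power-sum vectors. The paper's uses only the complete mean value theorem plus an elementary packing lemma.

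Two small remarks.
\begin{itemize}
\item State explicitly that the greedy choice makes the chosen sets pairwise distinct in each $j_0$-th power sum. Then, for fixed $j_0$, the covering classes are distinct classes of $\sigma_{J\cup\{j_0\}}$. This is what keeps your bound linear in $m$. Linearity is exactly what lets the factor $m^{2/k(k+1)}$ in the hypothesis cancel, so that $N_0$ depends only on $k$ once $\varepsilon<1/(k^2+k+1)$ is fixed.
\item Your first pass, which produces only distinct full vectors, is superseded by the second and can be dropped.
\end{itemize}
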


\begin{remark}
The following shows that we cannot get a result like this just assuming $S$ is an arbitrary subset of integers with small doubling constant like in Theorem \ref{main_theorem}:  if we take $S = \{1 + r M\ :\ r=1,2,\ldots,n\}$, then for $M$ large enough, if $A,B \subseteq S$, we have 
$A = \{1 + a_i M\ :\ i=1,\ldots,s\}$ and 
$B = \{1 + b_i M\ :\ i=1,\ldots,s\}$, where the 
$a_i,b_i$ are in $\{1,\ldots,n\}$.  Then, if  
$\sum_{i=1}^s (1 + a_iM)^j \neq \sum_{i=1}^s (1 + b_iM)^j$, expanding out the terms (using the binomial theorem) one sees this implies that for some $h \leq j$,
$\sum_{i=1}^s a_i^h \neq \sum_{i=1}^s b_i^h$; and so, for any $j' \geq h$ we would have to have 
$\sum_{i=1}^s (1 + a_i M)^{j'} \neq \sum_{i=1}^s 
(1 + b_iM)^{j'}$.  Therefore, the only choices of $J$
like in Corollary \ref{wooley_corollary} that would work for this set would be ones of the form
$J = \{1,2,\ldots,m\}$ for some $m \leq k$.

Results similar to Corollary~\ref{wooley_corollary} can also be proved in number fields/function fields as a consequence of Vinogradov's mean value theorem in number fields and function fields proved by Wooley \cite{W19}. The remark above also applies to the number field/function field setting.
\end{remark}

\smallskip

An example consequence of Corollary~\ref{wooley_corollary} is the following corollary related to a theorem of Friedlander and Lagarias on smooth numbers in short intervals
\cite[Theorem 4]{FL87}:

\begin{corollary}\label{smooth_corollary} For integers $r \geq 3$ of the form $k(k+1)/2$, real numbers $0 < \alpha < {1 \over (r+1)\sqrt{2r}}$, integers $N > N_0(\alpha,r)$, and 
sets of integers
$$
C\ \subseteq\ [1, N^{1/r}],
\ |C| > N^{(1 - \alpha)/r},
$$
there exists some constant $\kappa = \kappa(\alpha,r)>0$, an integer $M < N$, and an increasing sequence of integers
$$
x_1,\ x_2,\ \ldots,\ x_n \in [1,2M],\ n\ >\ \kappa M^{\frac{1}{\sqrt{2}r^{5/2}(r+1)} - \frac{\alpha}{r^2}},  
$$
each the product of exactly $r$ numbers from $C$, such that for all $1\leq i \leq n-1$,
$$
x_{i+1} - x_i\ =\ \kappa M^{1- \sqrt{2 \over r} + \varepsilon_i},\ {\rm where\ } \frac{1}{r} \leq \varepsilon_i \leq {3 \over r}.
$$
\end{corollary}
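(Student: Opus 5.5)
The plan is to derive Corollary~\ref{smooth_corollary} from Corollary~\ref{wooley_corollary} with $J=\{1,\ldots,k-1\}$, applied to a \emph{reflected and localized} copy of $C$. The products $x_i$ will then be values of polynomials with prescribed leading elementary symmetric functions. Throughout, $r=k(k+1)/2$, $L=\lfloor N^{1/r}\rfloor$, and $k^2+k+1=2r+1$.

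\emph{Step 1 (localization).} I pigeonhole $C\subseteq[1,L]$ into intervals of length $H=L^{\theta}$, with $\theta<1$ to be chosen. This gives an interval $(L'-H,L']$ containing at least $|C|H/L>L^{\theta-\alpha}$ elements of $C$. Set $D=\{L'-c: c\in C\cap(L'-H,L']\}\subseteq[0,H)$, shifted by one so that $D\subseteq[H]$.

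\emph{Step 2 (applying Corollary~\ref{wooley_corollary}).} I apply Corollary~\ref{wooley_corollary} to $D\subseteq[H]$ with $J=\{1,\ldots,k-1\}$. The hypothesis $|D|\ge m^{1/r}H^{1-2/(2r+1)}$ holds once
\[
m\approx H^{r(2/(2r+1))}\,L^{-r\alpha}.
\]
This quantity is a positive power of $L$ precisely because $\alpha<1/((r+1)\sqrt{2r})$ and $\theta$ is chosen close to $1$. The output is sets $A_1,\ldots,A_m\subseteq D$ of size $r$ with equal power sums in degrees $1,\ldots,k-1$ and pairwise distinct $k$-th power sums. By Newton's identities (in characteristic $0$), their elementary symmetric functions satisfy $e_j(A_h)=e_j(A_t)$ for $j<k$ and $e_k(A_h)\ne e_k(A_t)$.

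\emph{Step 3 (forming the products).} Set $x_h=\prod_{d\in A_h}(L'-d)$, which is a product of exactly $r$ elements of $C$. We have $\prod_{d\in A}(L'-d)=\sum_j(-1)^je_j(A)L'^{\,r-j}$, so
\[
x_h-x_t=\pm\bigl(e_k(A_h)-e_k(A_t)\bigr)L'^{\,r-k}+O_r\bigl(H^{k+1}L'^{\,r-k-1}\bigr).
\]
The error uses $|e_j(A)|\le\binom rj H^j$ and the fact that the terms decay geometrically since $H/L'\to0$. Hence the sign and size of $x_h-x_t$ are governed by $\Delta e_k$ whenever $|\Delta e_k|\gg H^{k+1}/L'$. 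Take $M\asymp L'^{\,r}\le N$. Since $k/r\approx\sqrt{2/r}$, the main term has size $|\Delta e_k|\,M^{1-k/r}$.

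\emph{Step 4 (extracting the sequence).} I sort the $m$ distinct values $e_k(A_h)$, which lie in a range of length $O(H^k)$. I then extract the chain greedily: choose a threshold $G$ with $H^{k+1}/L'\ll G$, and repeatedly take the next value whose $e_k$ exceeds the previous one by at least $G$. To bound gaps from above as well, I first pigeonhole the range of length $O(H^k)$ into sub-intervals of length $\asymp G$. I keep only a long run of consecutive occupied sub-intervals, taking one point from every other sub-interval. The consecutive $e_k$-gaps then lie in $[G,4G]$. Consequently the $x$-gaps are $\asymp G\,M^{1-k/r}$, and they are increasing in the $e_k$ order up to the fixed sign. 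Writing $G=M^{\varepsilon}$ up to constants, this yields gaps of the form $\kappa M^{1-\sqrt{2/r}+\varepsilon_i}$. The discrepancy between $k/r$ and $\sqrt{2/r}$ and the factor $4$ are absorbed into the allowed window $1/r\le\varepsilon_i\le 3/r$ by taking $G\approx M^{2/r}$.

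\emph{Main obstacle.} I expect the main difficulty to be Step 4 together with the exponent bookkeeping. We need a long run of occupied sub-intervals, whereas Corollary~\ref{wooley_corollary} only says the $m$ values of $e_k$ are distinct, not well spread. My first attempt would be to count: $m$ distinct values in $O(H^k/G)$ boxes forces at least $m/\text{const}$ occupied boxes. If long runs fail to exist, I would instead allow $\varepsilon_i$ to float within the window, which is exactly why the statement has a range $[1/r,3/r]$. The length $n$ of the chain is then at least roughly $m/(H^kG^{-1})$ up to lower-order losses. Optimizing $\theta$ so that this count is $\gg M^{1/(\sqrt2 r^{5/2}(r+1))-\alpha/r^2}$, while keeping $H^{k+1}/L'\ll G$, should produce the stated exponent.
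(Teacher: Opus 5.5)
Your Steps 1--3 are sound in themselves: Newton's identities and the expansion of $\prod_{d\in A}(L'-d)$ are correct. You do need to discard $C\cap[1,N^{(1-\alpha)/r}/2]$ first, so that the localized interval sits at height $L'$ with $H/L'\to0$. The genuine gap is Step 4, and it comes from the choice $J=\{1,\ldots,k-1\}$ in Step 2.

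\textbf{Why Step 4 fails.} Corollary~\ref{wooley_corollary} only tells you that the $m$ integers $e_k(A_h)$ are pairwise distinct. It says nothing about their spacing.
\begin{enumerate}[(i)]
\item With your parameters, $m\le H< L'$, while your threshold is $G\approx M^{2/r}\asymp L'^{2}$. So all $m$ values may lie in one interval of length $m<G$. Then no two differ by $G$, and your chain has length $1$.
\item Your counting claim is false. A box of length $G$ can hold $G$ of these values, so you only get at least $m/G$ occupied boxes, which is less than $1$ here, not $m/\mathrm{const}$.
\item Letting $\varepsilon_i$ float does not help. For pairs with $|\Delta e_k|\lesssim H^{k+1}/L'$, the degree-$(k+1)$ error swamps the main term, so you have no lower bound on $|x_h-x_t|$ at all.
\item Even with no error, $|\Delta e_k|=1$ gives $|x_h-x_t|\asymp M^{1-k/r}=M^{1-\sqrt{2/r}+(\sqrt{2r}-k)/r}$. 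Since $(\sqrt{2r}-k)/r<1/r$, this is below the window.
\end{enumerate}
So degree $k$ is the wrong place for the discrepancy. To land in the window, every consecutive gap $|\Delta e_k|$ would have to be at least about $L'^{\,1-(\sqrt{2r}-k)}$ and above $H^{k+1}/L'$. That is spacing information Corollary~\ref{wooley_corollary} cannot supply.

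\textbf{The missing idea.} The paper, via Corollary~\ref{product_difference}, puts the discrepancy one degree lower while keeping the top degree equal. It applies Corollary~\ref{wooley_corollary} with $J=\{1,\ldots,k\}\setminus\{k-1\}$ to $S=C'-M'\subseteq[1,M'^{\theta}]$, where $C'=C\cap(M',M'+M'^{\theta}]$ and $M'\ge N^{(1-\alpha)/r}/2$. The localization scale is short, $\theta=1/\sqrt{2r}$, not close to $1$. For $P_h=\prod_{a\in A_h}(M'+a)$,
\[
\log\frac{P_h}{P_t}=\sum_{i\ge1}\frac{(-1)^{i+1}}{i\,M'^{\,i}}\Bigl(\sum_{a\in A_h}a^{i}-\sum_{a\in A_t}a^{i}\Bigr).
\]
Every term with $i\le k$, $i\ne k-1$, vanishes. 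The tail $i\ge k+1$ is $O(M'^{(k+1)(\theta-1)})=o(M'^{-(k-1)})$, because $(k+1)\theta=\sqrt{(k+1)/k}<2$.

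Hence for \emph{every} pair $h\ne t$ you get $|P_h-P_t|\asymp|\Delta p_{k-1}|\,M'^{\,r-k+1}$ with $1\le|\Delta p_{k-1}|\le 2rM'^{(k-1)\theta}$. This uses only that $\Delta p_{k-1}$ is a nonzero integer. With $M\approx M'^{r}$, every pairwise difference is $M^{1-\sqrt{2/r}+\varepsilon}$, where $\varepsilon$ is at least $(\sqrt{2r}-k+1)/r>1/r$ and exceeds that by at most $(k-1)\theta/r<1/r$. So $\varepsilon$ lies strictly inside $[1/r,3/r]$.

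Sorting the $m$ products then gives the sequence with $n=m$, and no extraction step is needed. The choice $m\approx (M'^{\theta/(r+1)-\alpha})^{1/r}$ gives the stated exponent $\frac{1}{\sqrt2\, r^{5/2}(r+1)}-\frac{\alpha}{r^2}$.
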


\begin{remark} The proof of \cite[Theorem 4]{FL87}, like the
proof of Corollary \ref{smooth_corollary}, forms products of small integers, up to $N^{1/r}$; however, it does not allow one to choose the set $C$ from which to draw the factors as we do above.  On the other hand, our corollary pays a price by not requiring the $r$-fold products to come close to every integer in $[1,N]$.  

In Section \ref{sec:smooth}, we state and prove a more general corollary (Corollary~\ref{product_difference}) of Corollary~\ref{wooley_corollary} from which it can be derived.
\end{remark}

The results above motivate the following questions:

\begin{itemize}

\item Can one improve the strength of Theorem \ref{thm:Vinogradov} above, so that we get a result like Corollary~\ref{wooley_corollary} just when $|S| \gg_m N^\theta$, for some $\theta \in (0,1)$ that does not depend on $k$?  For example, can one replace $|S| \gg_m N^{1-2/(k^2+k+1)}$ with $|S| \gg_m N^{1/2}$, say?

\item From the remark after Corollary \ref{wooley_corollary} we see that we cannot get a conclusion like in that corollary just assuming the hypotheses of Theorem \ref{main_theorem}.  What kinds of natural conditions could we add to those hypotheses to get such a conclusion?

\item What can one say about the possible sizes of
$\sum_{a \in A} a^j - \sum_{b \in B} b^j$ for 
$j=k+1$ in Theorems \ref{main_theorem} and \ref{main_theorem_variant}, as well as the value of
$j \in \{1,2,\ldots,k\}\setminus J$ in Corollary \ref{wooley_corollary}?  The stronger the conclusion
on this, the tighter the range of differences in Corollary \ref{smooth_corollary} and similar sorts of conclusions.
\end{itemize}

\section{Proof of Theorem \ref{main_theorem}}
To prove the theorem we will need a proposition about Hilbert cubes.  First, though, we need some notation.

\begin{definition}
Given
an abelian group $G$ and
$\alpha_1, \ldots, \alpha_d\in G\setminus \{0\}$, we define
the \emph{Hilbert cube $H_0(\alpha_1, \ldots, \alpha_d)\subseteq G$} to be the set of all sums
$\sum_{s \in S} \alpha_s$ where
$S \subseteq \{1,2,\ldots,d\}$.  Alternatively, we could define this
as the following sumset
$$
H_0(\alpha_1,\ldots,\alpha_d)\ :=\ 
\{0,\alpha_1\} + \{0,\alpha_2\} + 
\cdots + \{0,\alpha_d\}.
$$
In addition, if $\alpha_0$ is some
translate, we let
$$
H(\alpha_0; \alpha_1,\ldots,\alpha_d)
\ :=\ \alpha_0 + H_0(\alpha_1, \ldots, \alpha_d).
$$
We say $H(\alpha_0; \alpha_1,\ldots,\alpha_d)$ is \emph{proper} if $|H(\alpha_0; \alpha_1,\ldots,\alpha_d)| = 2^d$.    
\end{definition}

The following lemma is a variant of Szemer\'edi's cube lemma~\cite{S69}:

\begin{lemma}\label{lem:cube} 
Suppose $d \geq 1$ is an integer, $G$ is an abelian group, 
and that $S$ is a subset of $G$ such that $|S-S| = K |S|$ where
\begin{equation}
|S|\ \geq\ 3^{d+1}(2K)^{2^d-1}.
\end{equation}
Then, $S$ contains a proper Hilbert cube of dimension $d$.
\end{lemma}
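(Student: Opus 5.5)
We argue by induction on $d$, using the standard popular-difference step of Szemer\'edi's cube lemma. We prove the slightly stronger statement that any $S$ with $|S-S| \le K|S|$ and $|S| \ge 3^{d+1}(2K)^{2^d-1}$ contains a proper Hilbert cube of dimension $d$. The statement of Lemma~\ref{lem:cube} is the special case of equality.

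For the base case $d=1$, the hypothesis gives $|S|\ge 9\cdot 2K \ge 2$. So $S$ has two distinct elements $x\neq y$, and $H(x;y-x)$ is a proper cube of dimension $1$.

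For the inductive step, for each $t\in G\setminus\{0\}$ let $S_t := S\cap (S-t)=\{x\in S: x+t\in S\}$. We have $\sum_{t\in S-S}|S_t| = |S|^2$, and the term $t=0$ contributes only $|S_0|=|S|$. By pigeonhole there is a nonzero $t\in S-S$ with
\[
|S_t|\ \ge\ \frac{|S|^2-|S|}{|S-S|}\ \ge\ \frac{|S|-1}{K}\ \ge\ \frac{|S|}{2K},
\]
where the last inequality uses $|S|\ge 2$. Set $S' := S_t$. Then $S'\subseteq S$, so $S'-S'\subseteq S-S$ and
\[
|S'-S'|\ \le\ K|S|\ \le\ 2K^2|S'| =: K'|S'|.
\]
This is the reason for proving the stronger "$\le$" version: $S'$ only satisfies an inequality, not the equality in the lemma.

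We want $S'$ to meet the threshold for dimension $d-1$ with constant $K'=2K^2$, i.e. $|S'|\ge 3^{d}(2K')^{2^{d-1}-1} = 3^d(2K)^{2^d-2}$. We have
\[
|S'|\ \ge\ \frac{|S|}{2K}\ \ge\ \frac{3^{d+1}(2K)^{2^d-1}}{2K}\ =\ 3^{d+1}(2K)^{2^d-2},
\]
which is at least what is needed, with a factor $3$ to spare. The exponents match exactly because $(2K')^{2^{d-1}-1} = (4K^2)^{2^{d-1}-1} = (2K)^{2^d-2}$. By induction, $S'$ contains a proper cube $H(\alpha_0;\alpha_1,\ldots,\alpha_{d-1})$.

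Both $S'$ and $S'+t$ lie in $S$, so $H(\alpha_0;\alpha_1,\ldots,\alpha_{d-1},t)\subseteq S$. This cube has dimension $d$ but need not be proper: we need the translate $H+t$ to be disjoint from $H$.

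Properness is the main obstacle, and the spare factor $3$ is meant to handle it. The translate $H+t$ meets $H$ only if $t\in H_0-H_0$, where $H_0:=H_0(\alpha_1,\ldots,\alpha_{d-1})$. So we modify the pigeonhole step: we first find the cube inside $S'$ and only then choose $t$, excluding bad values.

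This reordering needs a fix, since the cube depends on $t$ through $S'=S_t$. I would first try the following repair. Count the popular differences more carefully by restricting the pigeonhole to the set $T$ of differences $t$ with $|S_t|\ge |S|/(3K)$. The differences outside $T$ contribute at most $|S-S|\cdot|S|/(3K) = |S|^2/3$ to the sum $\sum_t |S_t|$. Hence the popular differences carry mass at least $\tfrac{2}{3}|S|^2 - |S|$, so there are many of them.

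Since $|H_0-H_0|\le 3^{d-1}$, at most $3^{d-1}$ values of $t$ are forbidden for any given cube. The margin $3^{d+1}$ versus $3^d$ in the hypothesis gives room both to use the weaker threshold $|S|/(3K)$ and to avoid these few forbidden differences.

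If the circular dependence between the cube and $t$ persists, the fallback is to accept possible collisions and apply the induction inside each $S_t$ for popular $t$. The cube is degenerate only if $t\in H_0-H_0$, which forces a linear relation $t=\sum_i \epsilon_i\alpha_i$ with $\epsilon_i\in\{-1,0,1\}$. In that case we replace $S'$ by a subset of it avoiding the relevant collisions and still of size $\ge |S'|/3$, and absorb the resulting loss of a factor $3$ at each level into the $3^{d+1}$ term.
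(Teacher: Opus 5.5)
Your pigeonhole and doubling bookkeeping are fine ($|S_t|\ge |S|/(2K)$, $|S'-S'|\le 2K^2|S'|$, and $(4K^2)^{2^{d-1}-1}=(2K)^{2^d-2}$), so you do obtain a $d$-cube $H(\alpha_0;\alpha_1,\ldots,\alpha_{d-1},t)\subseteq S$. But you never establish properness, which is the real content of the lemma, and neither repair closes the gap.

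The cube that the induction hypothesis produces inside $S_t$ depends on $t$ in an uncontrolled way. So knowing that there are many popular $t$, and that each cube forbids at most $3^{d-1}$ values, gives nothing. For example, with $S=\{1,\ldots,n\}$, taking the most popular nonzero difference at each level gives $t=\pm1$ every time, and the output is an arithmetic progression, not a proper cube.

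The fallback can be made precise. Pass to a subset of $S_t$ with no two elements differing by $t$. Since $H-H=H_0-H_0$, any cube inside it is compatible with $t$. Such a subset of size $\ge |S_t|/3$ exists because the graph $x\sim x+t$ has maximum degree $2$. But the factor $3$ is not absorbed by $3^{d+1}$. You must then apply the induction hypothesis with constant $6K^2$, and the required size $3^d(12K^2)^{2^{d-1}-1}$ exceeds what you have, $3^d(2K)^{2^d-2}$, by a factor $3^{2^{d-1}-1}$.

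The missing idea is to impose the constraint in the other direction. Properness is symmetric in the generators and is equivalent to $\alpha_{i+1}\notin H_0(\alpha_1,\ldots,\alpha_i)-H_0(\alpha_1,\ldots,\alpha_i)$ for every $i$. So each \emph{newly} chosen difference only has to avoid the at most $3^i$ bad values determined by the generators \emph{already} chosen, and these are known at that moment.

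This is what the paper does, building $\alpha_1,\alpha_2,\ldots$ in the same order as your recursion ($\alpha_1$ first, then passing to $S_1=S\cap(S-\alpha_1)$, and so on). At each step it picks $\alpha_{i+1}\in S_i-S_i$ outside that bad set, maximizing $|S_i\cap(S_i-\alpha_{i+1})|$. Each bad difference contributes at most $|S_i|$ to $\sum_x|S_i\cap(S_i-x)|=|S_i|^2$. Hence the maximum is still at least $(|S_i|-3^i)/K_i\ge |S_i|/(2K_i)$ once $|S_i|\ge 2\cdot 3^i$. This, not absorbing size losses, is what the factor $3^{d+1}$ is for, and the $(2K)^{2^d-1}$ term is untouched.

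In your inductive framing, this means strengthening the hypothesis so that the outer generators (equivalently, their forbidden difference set) are passed into the recursive call.
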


\begin{proof}
We construct the Hilbert cube iteratively, by constructing a sequence of elements $\alpha_1,\alpha_2,\ldots,\alpha_d$ in $G$, and a corresponding sequence of sets
$$
S_0\ := S\ \supseteq\ S_1\ \supseteq\ \cdots\ \supseteq\ S_d,
$$
where for  $i \geq 1$ we have that 
$H_0(\alpha_1,\ldots,\alpha_i)$ is proper, and so that
$S_i$ will consist of all the choices
of $\alpha_0$ so that the Hilbert cubes $H(\alpha_0; \alpha_1,\ldots, \alpha_i) \subseteq S$. 

The way we construct a set $S_{i+1}$ from $S_i$ is given as follows:  given $\alpha_1,\ldots,\alpha_i$, among all choices of $x\in S_i-S_i$, we initially select those so that 
$H_0(\alpha_1,\ldots,\alpha_i,x)$ is proper; and then among {\it those}, we select $\alpha_{i+1} := x$ so that $|S_i \cap (S_i - x)|$ is maximal.  We furthermore will 
show that the sets $S_j$, $j \geq 0$, we construct have the property that
\begin{equation}\label{Kip}
|S_j - S_j|\ =\ K_j |S_j|,\ {\rm where\ } K_j\ \leq\ {(2K)^{2^j} \over 2},\ {\rm and\ } |S_j| \geq 3^{d+1}.
\end{equation}
Note that $K_0 = K$.

Let us first establish there are many choices for $x$ and that we can get that $S_i \cap (S_i-x)$ has an abundance of elements for at least one of them:  we assume that $H_0(\alpha_1,\ldots,\alpha_i)$ is proper.  We will say that a choice for $x$ is 
{\it bad} if $H_0(\alpha_1,\ldots,\alpha_i, x)$ is not proper.
Notice that since 
\[|H_0(\alpha_1,\ldots,\alpha_i)-H_0(\alpha_1,\ldots,\alpha_i)| \leq 3^i,\]
there can be at most $3^{i}$ bad choices for 
$x$. Let $X$ denote the remaining {\it good} choices for
$x \in S_i-S_i$ (note that this means $S_i \cap (S_i - x)\neq \emptyset$).  
Obviously 
$$
|X|\ \geq\ |S_i - S_i| - 3^d.
$$
This lower bound is trivially non-zero assuming the last lower bound in \eqref{Kip}.

We have that
\begin{eqnarray}\label{maxx}
\max_{x \in X} |S_i \cap (S_i - x)|
\ &\geq&\ {1 \over |X|} 
\sum_{x \in X} \#\{a, b \in S_i\ :\ a-b = x\}\nonumber \\
&\geq&\ {|S_i|^2 - 3^i|S_i| \over |S_i - S_i|}\ \geq\ 
{|S_i| \over 2 K_i}.
\end{eqnarray}
As we said before, we let $\alpha_{i+1}$ denote this maximal choice for $x \in X$.

If we then let $S_{i+1}$ denote this set $S_i \cap (S_i - \alpha_{i+1})$ we have
that
$$
K_{i+1}\ =\ {|S_{i+1} - S_{i+1}| \over |S_{i+1}|}\ \leq\ {|S_i - S_i| \over
|S_i|/2K_i}\ =\ {K_i |S_i| \over 
|S_i|/2K_i}\ =\ 2 K_i^2,
$$
provided $|S_i| \geq 2$.  So,
$$
K_{i+1}\ \leq\ 2 K_i^2\ \leq\ 
2^3 K_{i-1}^4\ \leq\ 2^7 K_{i-2}^8
\ \leq\ \cdots\ \leq\ 2^{2^{i+1}-1} K_0^{2^{i+1}}\ =\ {(2K)^{2^{i+1}} \over 2}.
$$

From inequality~\eqref{maxx}, it follows that
$$
|S_{i+1}|\ \geq\ {|S_i| \over 2K_i}
\ \geq\ {|S_{i-1}| \over 4 K_i K_{i-1}}
\ \geq\ \cdots\ \geq\ {|S| \over 2^{i+1} K_i K_{i-1}\cdots K_0}\ \geq\ 
{|S| \over (2K)^{2^{i+1}-1}}.
$$
We note that if $i \leq d-1$ then
the last inequality of \eqref{Kip} holds, provided
$$
|S|\ \geq\  3^{d+1}(2K)^{2^d-1}.
$$

To finish the proof, we need to establish that \(S_d\) contains all the choices
for \(\alpha_0\) such that \(H(\alpha_0;\alpha_1,\ldots,\alpha_d)\subseteq S\).
We prove this by induction. Assume that \(S_i\) consists of all choices of
\(\alpha_0\) such that \(H(\alpha_0;\alpha_1,\ldots,\alpha_i)\subseteq S\).
Now suppose
\[
        \alpha'_0\in S_{i+1}=S_i\cap(S_i-\alpha_{i+1}).
\]
Then \(\alpha'_0\in S_i\) and \(\alpha'_0+\alpha_{i+1}\in S_i\). By the
induction hypothesis,
\[
        H(\alpha'_0;\alpha_1,\ldots,\alpha_i)\subseteq S
        \quad\text{and}\quad
        H(\alpha'_0+\alpha_{i+1};\alpha_1,\ldots,\alpha_i)\subseteq S.
\]
Since
\[
        H(\alpha'_0;\alpha_1,\ldots,\alpha_i)\cup        H(\alpha'_0+\alpha_{i+1};\alpha_1,\ldots,\alpha_i)
        =
        H(\alpha'_0;\alpha_1,\ldots,\alpha_{i+1}),
\]
we get \(H(\alpha'_0;\alpha_1,\ldots,\alpha_{i+1})\subseteq S\). This completes
the induction and finishes the proof of the lemma.
\end{proof}

To prove Theorem~\ref{main_theorem}, we need to find several Hilbert cubes in the set $S$, with their generators satisfying some algebraic relations. This can be summarized in the following proposition.

\begin{proposition}\label{prop:cube}
    Suppose $d,\ell$ are positive integers, $R$ is an integral domain, and $S\subseteq R$ is finite such that $|S-S| \leq \ell K|S|$, where
    \begin{equation}\label{eq:Slower}
    |S| \geq 2\ell(3^{d+1}+3^{\ell})(4\ell^2K)^{2^d-1}
    \end{equation}
    Then there exist $\ell$ disjoint proper Hilbert cubes $H_i = H(\alpha_{i,0};\alpha_{i,1},\ldots,\alpha_{i,d}),(1\leq i\leq \ell)$, such that $H_i\subseteq S$ for all $i$, and
    \begin{equation}\label{eq:gener_relation}
    \sum_{i=1}^\ell \eps(i)\prod_{j=1}^d\alpha_{i,j}\neq 0
    \end{equation}
    for any $\eps\in\{-1,0,1\}^\ell\setminus\{(0,0,\ldots,0)\}$.
\end{proposition}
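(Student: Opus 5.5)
The plan is to find the cubes one at a time, applying Lemma~\ref{lem:cube} repeatedly to large pieces of $S$. Before each application we delete the previously found cubes, and we also delete any elements whose use could create a forbidden relation~\eqref{eq:gener_relation}. The hypothesis $|S-S|\le \ell K|S|$ together with the lower bound~\eqref{eq:Slower} is designed so that after removing fewer than half of the elements of $S$, the remaining set $S'$ still satisfies $|S'-S'|\le |S-S|\le \ell K|S|\le 2\ell K|S'|$ and $|S'|\ge 3^{d+1}(4\ell K)^{2^d-1}$. Lemma~\ref{lem:cube} then applies to $S'$ with $2\ell K$ in place of $K$. The factor $4\ell^2$ in~\eqref{eq:Slower} leaves extra room, so I expect to apply the lemma to subsets that retain at least a $1/\ell$ or $1/(2\ell)$ fraction of $S$.

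\textbf{Disjointness.} Suppose $H_1,\dots,H_{i}$ have been found. Their union has at most $i2^d$ elements, which is negligible compared with $|S|$ by~\eqref{eq:Slower}. Removing this union and applying Lemma~\ref{lem:cube} to the remainder gives a proper cube $H_{i+1}$ disjoint from the earlier ones.

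\textbf{The algebraic condition (main obstacle).} Write $P_i=\prod_{j}\alpha_{i,j}$. I will argue inductively that $P_1,\dots,P_i$ satisfy~\eqref{eq:gener_relation}, i.e.\ no nontrivial signed subsum $\sum\eps(t)P_t$ vanishes. For the new cube we need $P_{i+1}\notin \Sigma_i:=\{\sum_{t\le i}\eps(t)P_t\}$, a set of at most $3^{i}\le 3^\ell$ forbidden values; combined with the induction hypothesis this gives the condition for all $\eps$ supported on $\{1,\dots,i+1\}$. Since $R$ is an integral domain and all $\alpha_{i+1,j}\neq 0$, we have $P_{i+1}\ne0$.

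To control $P_{i+1}$, I would re-run the proof of Lemma~\ref{lem:cube} rather than use it as a black box. At the final step, with $\alpha_{i+1,1},\dots,\alpha_{i+1,d-1}$ already fixed, the last generator $x$ is chosen from a set of good candidates. The condition $P_{i+1}\in\Sigma_i$ says $x\cdot\prod_{j<d}\alpha_{i+1,j}=\sigma$ for some $\sigma\in\Sigma_i$. Since $R$ has no zero divisors and the prefix product is nonzero, each $\sigma$ determines at most one $x$. So at most $3^{\ell}$ additional candidates $x$ are bad, exactly parallel to the at most $3^{d}$ candidates excluded for properness. The averaging bound~\eqref{maxx} still gives $|S_{d}|\ge |S_{d-1}|/(2K_{d-1})$ provided $|S_{d-1}|\ge 2(3^{d}+3^{\ell})$; this is where the term $3^{d+1}+3^\ell$ in~\eqref{eq:Slower} enters. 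Because $R$ is not assumed commutative-free of issues but is an integral domain, the cancellation $x c=\sigma\Rightarrow$ uniqueness of $x$ is legitimate.

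\textbf{Size bookkeeping.} The remaining work is checking that the constants close up. After removing at most $\ell2^d\le |S|/2$ elements, the modified lemma needs $|S'|\ge (3^{d+1}+3^\ell)(2\cdot 2\ell K)^{2^d-1}$ with doubling ratio at most $2\ell K$. This is implied by~\eqref{eq:Slower}, whose extra factors $2\ell$ and $\ell^{2^d-1}$ give ample slack. Iterating $\ell$ times yields the required disjoint proper cubes $H_1,\dots,H_\ell\subseteq S$ satisfying~\eqref{eq:gener_relation}.
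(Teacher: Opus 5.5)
Your proposal is correct and takes essentially the paper's approach: re-run the construction of Lemma~\ref{lem:cube}, and at the last generator exclude the at most $3^{\ell}$ values of $x$ that would make $x\prod_{j<d}\alpha_{i+1,j}$ a signed sum of earlier products, using cancellation in the integral domain. The only difference is that you get disjointness by deleting the earlier cubes (at most $\ell 2^d\le|S|/2$ elements, so the doubling ratio stays $\le 2\ell K$), whereas the paper first partitions $S$ into $\ell$ pieces of size $\ge |S|/(2\ell)$ with ratio $\le 2\ell^2K$; both fit within \eqref{eq:Slower}, and your averaging at the final step is unnecessary, since any admissible $x\in S_{d-1}-S_{d-1}$ already makes $S_{d-1}\cap(S_{d-1}-x)$ nonempty.
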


\begin{proof}
    We partition $S$ into $\ell$ disjoint subsets $S_1,S_2,\ldots,S_\ell$ with $|S_i|\geq \lfloor|S|/\ell\rfloor$ for all $i$. Then for any fixed $1\leq i\leq \ell$, we have
    \begin{equation}\label{eq:doubling}
    |S_i-S_i|\leq |S-S|\leq \ell K|S|\leq 2\ell^2 K|S_i|.
    \end{equation}
    We also know that 
    \begin{equation}\label{eq:part_lower}
    |S_i|\geq \frac{|S|}{2\ell}\geq (3^{d+1}+3^\ell)(4\ell^2K)^{2^d-1},
    \end{equation}
    hence by Lemma~\ref{lem:cube} there exists a proper Hilbert cube
    $H_1 = H(\alpha_{1,0};\alpha_{1,1},\ldots,\alpha_{1,d})\subseteq S_1$. 

    To guarantee the algebraic restriction~\eqref{eq:gener_relation}, we construct the remaining Hilbert cubes inductively. Suppose that we have constructed a proper Hilbert cube
    \[
        H_i = H(\alpha_{i,0};\alpha_{i,1},\ldots,\alpha_{i,d})\subseteq S_i
    \]
    for every $1\leq i\leq t$, and that
    \[
        \sum_{i=1}^t \eps(i)\prod_{j=1}^d\alpha_{i,j}\neq 0
    \]
    for every nonzero $\eps\in \{-1,0,1\}^t$. For $i=t+1$, following the proof of Lemma~\ref{lem:cube} for \(d-1\) steps, we can construct
    \[
        A_0:=S_{t+1}\supseteq A_1\supseteq\cdots\supseteq A_{d-1}
    \]
    and $\alpha_{t+1,1},\ldots,\alpha_{t+1,d-1}$ such that
    \[
        |A_j-A_j|=K_j|A_j|,\qquad
        K_j\leq {(4\ell^2K)^{2^j}\over 2},\qquad
        |A_j|\geq 3^{d+1}+3^\ell .
    \]
    The last inequality follows from
    \[
        |A_j|\geq \frac{|S_{t+1}|}{(4\ell^2K)^{2^j-1}}
    \]
    and inequality~\eqref{eq:part_lower}. Put \(B=A_{d-1}\). Since \(B\) is the set of admissible translates for the \((d-1)\)-dimensional cube, to add the last generator we must choose
    \[
        \alpha_{t+1,d}\in B-B,
    \]
    so that some translate \(\alpha_{t+1,0}\in B\cap(B-\alpha_{t+1,d})\) exists.

    There are at most \(3^{d-1}\) choices of \(x\in B-B\) for which
    \[
        H_0(\alpha_{t+1,1},\ldots,\alpha_{t+1,d-1},x)
    \]
    is not proper. Also, for any $\eps\in\{-1,0,1\}^t\times\{\pm1\}$, the equation
    \[
        \sum_{i=1}^t \eps(i)\prod_{j=1}^d\alpha_{i,j}
        +\eps(t+1)x\prod_{j=1}^{d-1}\alpha_{t+1,j}=0
    \]
    has at most one solution in \(x\), since \(R\) is an integral domain and
    \(\prod_{j=1}^{d-1}\alpha_{t+1,j}\neq 0\), with the usual convention that an
empty product is \(1\). Thus there are at most \(3^\ell\)
    additional forbidden choices. Since
    \[
        |B-B|\geq |B|\geq 3^{d+1}+3^\ell>3^{d-1}+3^\ell,
    \]
    we can choose \(\alpha_{t+1,d}\in B-B\) avoiding all forbidden choices, and then choose
    \[
        \alpha_{t+1,0}\in B\cap(B-\alpha_{t+1,d}).
    \]
    This gives us a proper Hilbert cube
    \[
        H_{t+1}=H(\alpha_{t+1,0};\alpha_{t+1,1},\ldots,\alpha_{t+1,d})
        \subseteq S_{t+1}
    \]
    so that
    \[
        \sum_{i=1}^{t+1}\eps(i)\prod_{j=1}^d\alpha_{i,j}\neq 0
    \]
    for every nonzero $\eps\in \{-1,0,1\}^{t+1}$. This finishes the induction step.
\end{proof}

We will also use the following polynomial identity.

\begin{lemma}\label{lem:parity_identity}
Let $d\geq 1$. In the free associative algebra
$\Z\langle x_0,x_1,\ldots,x_d\rangle$, we have
\begin{equation}\label{eq:parity_lower}
\sum_{U\subseteq [d]}(-1)^{|U|}
\left(x_0+\sum_{i\in U}x_i\right)^j=0
\qquad (0\leq j<d).
\end{equation}
Moreover, in the commutative polynomial ring
$\Z[x_0,x_1,\ldots,x_d]$, we have
\begin{equation}\label{eq:parity_top}
\sum_{U\subseteq [d]}(-1)^{|U|}
\left(x_0+\sum_{i\in U}x_i\right)^d
=
(-1)^d d!x_1\cdots x_d.
\end{equation}
\end{lemma}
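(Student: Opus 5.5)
The plan is to view the left-hand side as an iterated finite-difference operator applied to the $j$-th power map, and to argue by induction on $d$. For an element $y$ of the free associative algebra, write
$$
\Delta_{y}F(x_0) := F(x_0+y)-F(x_0).
$$
Then the sum in \eqref{eq:parity_lower} is
$$
(-1)^d\,\Delta_{x_1}\Delta_{x_2}\cdots\Delta_{x_d}\, p_j(x_0), \qquad p_j(z):=z^j .
$$
Indeed, splitting the sum over $U\subseteq[d]$ according to whether $d\in U$ gives
$$
\Sigma_d(x_0)=\Sigma_{d-1}(x_0)-\Sigma_{d-1}(x_0+x_d),
$$
where $\Sigma_{d-1}$ denotes the same alternating sum over subsets of $[d-1]$. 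This recursion is purely formal. It uses only that substituting $x_0\mapsto x_0+x_d$ is a ring endomorphism of the free algebra fixing $x_1,\ldots,x_{d-1}$, so no commutativity is needed.

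To get \eqref{eq:parity_lower}, expand $(x_0+\sum_{i\in U}x_i)^j$ as a sum of noncommutative words of length $j$ in the letters $x_0$ and $x_i$ ($i\in U$). Fix a word $w$ of length $j$ and let $V\subseteq[d]$ be the set of indices $i\ge1$ whose letters occur in $w$. The word $w$ arises from the term indexed by $U$ (with coefficient exactly $1$) if and only if $V\subseteq U$. Its total coefficient in the alternating sum is therefore
$$
\sum_{V\subseteq U\subseteq[d]}(-1)^{|U|}=(-1)^{|V|}(1-1)^{d-|V|}.
$$
This vanishes unless $V=[d]$. Since $|V|\le j<d$, every coefficient is $0$, which proves \eqref{eq:parity_lower}. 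This inclusion–exclusion over words is the cleanest route: it works verbatim in $\Z\langle x_0,\ldots,x_d\rangle$ and avoids the difference-operator induction.

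For \eqref{eq:parity_top}, apply the same computation with $j=d$. Only words $w$ with $V=[d]$ survive, each with coefficient $(-1)^d$. A word of length $d$ that uses every letter $x_1,\ldots,x_d$ must use each exactly once and cannot contain $x_0$. So the surviving words are exactly the $d!$ orderings $x_{\sigma(1)}\cdots x_{\sigma(d)}$. In the commutative ring each of these equals $x_1\cdots x_d$, giving $(-1)^d d!\,x_1\cdots x_d$.

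The only point needing care is the claim that each word appears with coefficient exactly $1$ in the expansion of the $j$-th power. This holds because the letters are distinct free generators and we expand the product of $j$ copies of the sum letter by letter, without collecting terms.
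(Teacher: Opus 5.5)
Your proof is correct and is essentially the paper's argument: fix a noncommutative word, observe that it appears with coefficient $1$ exactly in the terms with $U\supseteq V$, and evaluate the alternating sum as $(-1)^{|V|}(1-1)^{d-|V|}$; in degree $d$ only the $d!$ orderings of $x_1\cdots x_d$ survive. The finite-difference recursion in your opening paragraph is valid but never used, so you could drop it.
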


\begin{proof}
To prove equation~\eqref{eq:parity_lower}, fix a non-commutative monomial $W$ of degree $j<d$, and let
\[
T=\{i\in [d]:x_i\text{ occurs in }W\}.
\]
The coefficient of $W$ in $(x_0+\sum_{i\in U}x_i)^j$ is $1$ if $T\subseteq U$, and is $0$ otherwise. Since $|T|\leq j<d$, the coefficient of $W$ in the left-hand side of equation~\eqref{eq:parity_lower} is
\[
\sum_{[d] \supseteq U\supseteq T}(-1)^{|U|}
=
(-1)^{|T|}(1-1)^{d-|T|}
=
0.
\]
This proves equation~\eqref{eq:parity_lower}.

For equation~\eqref{eq:parity_top}, the same argument shows that every monomial of degree $d$ missing some $x_i$, $1\leq i\leq d$, has coefficient zero. The only remaining monomial is $x_1\cdots x_d$, whose coefficient is $(-1)^d d!$.
\end{proof}

Now we are ready to present the proof of Theorem~\ref{main_theorem}.

\begin{proof}[Proof of Theorem~\ref{main_theorem}(2)]
Given Proposition~\ref{prop:cube}, to prove our theorem we first let $\ell = \lceil\log_2 m\rceil$, $d=k+1$, and
then we note that~\eqref{eq:Slower} 
would hold, provided that 
$$
K\ \leq\ c|S|^{1/(2^{k+1}-1)},\ {\rm where\ } c\ =\ \frac{1}{4\ell[2\ell(3^{k+2}+3^{\ell})]^{1/(2^{k+1}-1)}}.
$$
Note that $c$ here depends only on $k$ and $m$.  So, assuming $S$ satisfies the hypotheses of the main theorem, we get that it contains $\ell$ disjoint proper Hilbert cubes $H_i = H(\alpha_{i,0};\alpha_{i,1},\ldots,\alpha_{i,k+1})$, $(1\leq i\leq \ell)$, each of dimension $k+1$ as given by the above proposition.  

The idea of the rest of the proof is to prove a polynomial analogue of the main theorem, and then to replace the variables with the $\alpha_{i,j}$'s.  Towards this goal we define a set of special linear forms (with coefficients $0$ and $1$) in ${\mathbb Z}[x_1,\ldots,x_{k+1}]$ as follows:
\[
\mathcal{ L}_{k+1}\ :=\ \{ \varepsilon_1 x_1 + \cdots + \varepsilon_{k+1} x_{k+1}\ :\ 
{\rm for\ }i=1,\ldots,k+1,\ \varepsilon_i \in \{0,1\}\}.
\]
We then let $\mathcal{ O}_{k+1}$ denote the elements of $\mathcal{ L}_{k+1}$ with an odd number of non-zero terms, and $\mathcal{ E}_{k+1}$ denote the elements with an even number of non-zero terms. By Lemma~\ref{lem:parity_identity}, for $j=1,\ldots,k$,
\begin{equation}\label{foe}
\sum_{f\in \mathcal{ O}_{k+1}}(x_0+f(x_1,\ldots,x_{k+1}))^j
=
\sum_{g\in \mathcal{ E}_{k+1}}(x_0+g(x_1,\ldots,x_{k+1}))^j,
\end{equation}
while
\begin{equation}\label{eq:parity_k+1}
\sum_{f\in \mathcal{ O}_{k+1}}(x_0+f(x_1,\ldots,x_{k+1}))^{k+1}
-
\sum_{g\in \mathcal{ E}_{k+1}}(x_0+g(x_1,\ldots,x_{k+1}))^{k+1}
=
(-1)^k(k+1)!x_1\cdots x_{k+1}.
\end{equation}

Now for any fixed $1\leq i\leq \ell$, replacing $x_r$ by $\alpha_{i,r}$ in equations~\eqref{foe} and~\eqref{eq:parity_k+1}, we have
\[
\sum_{a\in B_{i,0}}a^j = \sum_{b\in B_{i,1}}b^j
\]
for all $1\leq j\leq k$ while
\begin{equation}\label{eq:k+1_diff}
\sum_{a\in B_{i,0}}a^{k+1}-\sum_{b\in B_{i,1}}b^{k+1} = (-1)^k(k+1)!\alpha_{i,1}\cdots\alpha_{i,k+1}
\ \neq\ 0,
\end{equation}
where 
\[B_{i,0} = \{\alpha_{i,0}+f(\alpha_{i,1},\ldots,\alpha_{i,k+1}):f\in \mathcal{O}_{k+1}\},\quad B_{i,1} = \{\alpha_{i,0}+g(\alpha_{i,1},\ldots,\alpha_{i,k+1}):g\in \mathcal{E}_{k+1}\}.\]
Here the last inequality follows since $\mathrm{char}(R)>k+1$ or $\mathrm{char}(R)=0$, and $\alpha_{i,1}\cdots\alpha_{i,k+1}$ is non-zero. Since $H_i$ is proper, we also have $|B_{i,0}|=|B_{i,1}|=2^k$.

Next we define $2^\ell$ distinct subsets of $S$ as follows: for each $\sigma\in \{0,1\}^\ell$, let 
\[A_\sigma = \bigcup_{i=1}^\ell B_{i,\sigma(i)}.\]
For any $\sigma\neq \tau\in \{0,1\}^\ell$, from our definition of $B_{i,0},B_{i,1}$ we must have
\[\sum_{a\in A_\sigma}a^j = \sum_{b\in A_\tau}b^j\]
for any $1\leq j\leq k$. On the other hand, from equation~\eqref{eq:k+1_diff}, there exists some $\eps\in \{-1,0,1\}^\ell\setminus\{(0,0,\ldots,0)\}$ such that
\[\sum_{a\in A_\sigma}a^{k+1}-\sum_{b\in A_\tau}b^{k+1} = (k+1)!\sum_{i=1}^\ell \eps(i)\prod_{j=1}^{k+1}\alpha_{i,j}.\]
By Proposition~\ref{prop:cube}, this is nonzero. Since $|A_\sigma| = \sum_{i}|B_{i,\sigma(i)}| = \ell 2^k$ for each $\sigma\in \{0,1\}^\ell$, and $2^\ell\geq m$, this finishes the proof of the theorem.
\end{proof}

\medskip

\begin{proof}[Proof of Theorem~\ref{main_theorem}(1)]
The proof is almost identical to the proof above. We shall only need the existence of $\ell = \lceil\log_2m\rceil$ disjoint proper Hilbert cubes $H_1,\ldots, H_\ell$ in $S$, each of dimension $k+1$, without the extra algebraic constraint~\eqref{eq:gener_relation}. This can be proved in any ring by modifying the proof of Proposition~\ref{prop:cube}. Also, we now regard $\mathcal{L}_{k+1}$ as a subset of $\Z\langle x_1,\ldots,x_{k+1}\rangle$, polynomials in non-commuting variables. By equation~\eqref{eq:parity_lower}, for $j=1,\ldots,k$,
\[
\sum_{f \in \mathcal{O}_{k+1}} (x_0+f(x_{1},\ldots,x_{k+1}))^{j}
= \sum_{g \in \mathcal{E}_{k+1}} 
(x_{0}+g(x_{1},\ldots,x_{k+1}))^{j}
\]
holds in $\Z\langle x_0,x_1,\ldots,x_{k+1}\rangle$.

It follows that for any $1\leq i\leq \ell$, $1\leq j\leq k$,
\[
\sum_{f \in \mathcal{O}_{k+1}} (\alpha_{i,0}+f(\alpha_{i,1},\ldots,\alpha_{i,k+1}))^{j}
= \sum_{g \in \mathcal{E}_{k+1}} 
(\alpha_{i,0}+g(\alpha_{i,1},\ldots,\alpha_{i,k+1}))^{j}.
\]
Defining $B_{i,0},B_{i,1}$ for $1\leq i\leq \ell$ and $A_\sigma$ for $\sigma\in \{0,1\}^\ell$ as in the proof of Theorem~\ref{main_theorem}(2) completes the proof.
\end{proof}

We have the following proposition showing that for the polynomial analogue established in the above proof, the choice of $\mathcal{O}_{k+1}$ and $\mathcal{E}_{k+1}$ is optimal.
\begin{proposition}\label{propAB}
Let \(h=k+1\). For each \(U\subseteq [h]\), write
\[
        \ell_U=\sum_{i\in U}x_i\in \mathcal L_h .
\]
Let \(\mathcal A,\mathcal B\subseteq \mathcal L_h\) be disjoint and nonempty, and suppose that
\(\ell_\emptyset=0\in \mathcal A\cup \mathcal B\). Assume that
\[
        \sum_{\ell_U\in\mathcal A}\ell_U^j
        =
        \sum_{\ell_U\in\mathcal B}\ell_U^j
\]
as identities in \(\mathbb Z[x_1,\ldots,x_h]\) for $1\leq j\leq k$. Then
\[
        \mathcal{A}\cup \mathcal{B}=\mathcal L_h.
\] Moreover, after possibly interchanging \(\mathcal A\) and \(\mathcal B\), the nonzero elements
of \(\mathcal A\) are precisely the \(\ell_U\) with \(|U|\equiv h\pmod 2\), and the nonzero
elements of \(\mathcal B\) are precisely the \(\ell_U\) with \(|U|\not\equiv h\pmod 2\). The zero
form may lie on either side. \end{proposition}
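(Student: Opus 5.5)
We encode the partition by a weight function and read off constraints from monomial coefficients, as in the proof of Lemma~\ref{lem:parity_identity}. Define $w:2^{[h]}\to\{-1,0,1\}$ by $w(U)=1$ if $\ell_U\in\mathcal A$, $w(U)=-1$ if $\ell_U\in\mathcal B$, and $w(U)=0$ otherwise. This is well defined because $\mathcal A$ and $\mathcal B$ are disjoint and the forms $\ell_U$ are pairwise distinct. The hypothesis becomes
\[
\sum_{U\subseteq[h]} w(U)\,\ell_U^j=0\qquad(1\le j\le k).
\]
Note that we do not assume this for $j=0$, so $|\mathcal A|=|\mathcal B|$ is not given. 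This is exactly why the zero form ends up unconstrained.

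\textbf{Step 1: coefficient extraction.} Fix a nonempty $T\subseteq[h]$ with $|T|\le k$, and choose any exponent vector $\beta$ with support exactly $T$ and total degree $j:=|\beta|$, where $|T|\le j\le k$ (for instance $j=|T|$). The monomial $x^\beta$ appears in $\ell_U^j$ with coefficient the multinomial $\binom{j}{\beta}$ if $T\subseteq U$, and with coefficient $0$ otherwise. Since $\binom{j}{\beta}$ is a positive integer and we work in $\mathbb Z[x_1,\dots,x_h]$, comparing coefficients of $x^\beta$ gives
\[
F(T):=\sum_{U\supseteq T} w(U)=0\qquad\text{for all } T \text{ with } 1\le|T|\le k=h-1.
\]
Thus $F$ vanishes on every nonempty proper subset of $[h]$. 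The only unconstrained values are $F([h])=w([h])=:c$ and $F(\emptyset)$. This step is the only place requiring care: one must check that a single monomial isolates the superset sum, so that no cancellation between different $U$ can occur.

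\textbf{Step 2: M\"obius inversion.} On the Boolean lattice,
\[
w(U)=\sum_{T\supseteq U}(-1)^{|T|-|U|}F(T).
\]
For $U\neq\emptyset$, every $T\supseteq U$ is nonempty, so only $T=[h]$ contributes. Hence
\[
w(U)=(-1)^{h-|U|}c\qquad\text{for every nonempty } U\subseteq[h],
\]
and this holds trivially for $U=[h]$ as well. The value $w(\emptyset)$ is left free, which reflects the fact that the zero form may lie on either side.

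\textbf{Step 3: conclusion.} If $c=0$, then $w(U)=0$ for every nonzero form, so $\mathcal A\cup\mathcal B\subseteq\{0\}$. This contradicts the assumption that $\mathcal A$ and $\mathcal B$ are disjoint and both nonempty. Therefore $c=\pm1$, and after interchanging $\mathcal A$ and $\mathcal B$ we may take $c=1$. Then every nonzero $\ell_U$ lies in $\mathcal A$ exactly when $|U|\equiv h\pmod 2$, and in $\mathcal B$ otherwise; in particular every nonzero form is used. Combined with the hypothesis $0\in\mathcal A\cup\mathcal B$, this gives $\mathcal A\cup\mathcal B=\mathcal L_h$, with no constraint on which side contains $0$.
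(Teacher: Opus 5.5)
Your proposal is correct and follows essentially the same route as the paper. The paper also encodes the partition by weights $w_U\in\{-1,0,1\}$, extracts $F(T)=\sum_{U\supseteq T}w_U=0$ for every nonempty proper $T$ from the coefficient of the squarefree monomial $\prod_{i\in T}x_i$ in the $|T|$-th power identity (your choice $j=|T|$), and then uses M\"obius inversion on $2^{[h]}$ to get $w_U=(-1)^{h-|U|}w_{[h]}$ for nonempty $U$, with $w_{[h]}=\pm1$ forced because $\mathcal A,\mathcal B$ are disjoint and nonempty.
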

\begin{proof}
For \(U\subseteq [h]\), set
\[
        w_U=\mathbf 1_{\ell_U\in \mathcal{A}}-\mathbf 1_{\ell_U\in \mathcal{B}},
        \qquad
        F(T)=\sum_{U\supseteq T}w_U \quad (T\subseteq [h]).
\]
We claim that \(F(T)=0\) for every nonempty proper subset \(T\subsetneq [h]\).
Indeed, writing \(r=|T|\le h-1=k\), the coefficient of
\[
        x_T=\prod_{i\in T}x_i
\]
in \(\ell_U^r\) is \(r!\) if \(T\subseteq U\), and is \(0\) otherwise. Comparing
the coefficient of \(x_T\) in the identity for the \(r\)-th powers gives
\[
        0=r!\sum_{U\supseteq T}w_U=r!F(T).
\]

By Möbius inversion on the subset lattice \(2^{[h]}\) (see for example
\cite[Example~3.8.3]{S97}),
\[
        w_U=\sum_{T\supseteq U}(-1)^{|T|-|U|}F(T)
        \qquad(U\subseteq [h]).
\]
Thus, for every nonempty \(U\subseteq [h]\), all terms vanish except possibly
\(T=[h]\), and so
\[
        w_U=(-1)^{h-|U|}F([h]).
\]
Since \(\mathcal{A}\) and \(\mathcal{B}\) are nonempty and disjoint, and since
\(\ell_\emptyset\in \mathcal{A}\cup \mathcal{B}\), some nonzero \(\ell_U\) belongs to \(\mathcal{A}\cup \mathcal{B}\).
Hence \(w_U\ne0\) for some nonempty \(U\), so \(F([h])\ne0\). But
\(F([h])=w_{[h]}\in\{-1,0,1\}\), hence \(F([h])=\pm1\). Therefore
\(w_U\in\{\pm1\}\) for every nonempty \(U\), so every nonzero element of
\(\mathcal L_h\) lies in exactly one of \(\mathcal{A},\mathcal{B}\). Together with
\(\ell_\emptyset\in \mathcal{A}\cup \mathcal{B}\), this gives \(\mathcal{A}\cup \mathcal{B}=\mathcal L_h\).

The same formula also gives the stated parity description after possibly
interchanging \(\mathcal{A}\) and \(\mathcal{B}\). The zero form is not determined by the identities,
since all of its positive powers vanish.
\end{proof}

\section{Lower bounds} \label{lower_bound_section}

In the setting of Theorem~\ref{main_theorem_variant}, one would expect that when $S$ becomes sparser, more variables ($|A|+|B|$) are needed to guarantee the existence of a solution. When $R = \Z$ and $T = [N]$, given some positive integer $M = M(N)$ allowed to grow with $N$, and let $k\geq 2$ be fixed, we would like to know what is the smallest number $s$ such that for any $S\subseteq[N]$ of size $|S| = M$, there exist $A,B\subseteq S$ with $|A|=|B|=s$ such that 
\begin{equation}\label{eq:tarry}
\sum_{a \in A} a^j\ =\ \sum_{b \in B} b^j,\ {\rm for\ }1\leq j\leq k,\ {\rm but\ not\ }k+1 .
\end{equation}
It turns out that this problem is related to the study of generalized Sidon sets. Given an abelian group $G$ and an integer \(s\ge2\), we call a set \(A\subseteq G\) to be a \emph{$B_s[1]$ set} if for any $g\in G$, the number of solutions to the equation
\[
g = x_1+x_2+\cdots+x_s
\]
with $x_1,\ldots,x_s\in A$ is at most $1$, where we consider two such solutions to be the same if they differ only in the ordering of the summands. In particular, Sidon sets are precisely $B_2[1]$ sets. When $S\subseteq [N]$ is chosen in such a way that $\tilde{S}:= \{(n,n^2,\ldots,n^k):n\in S\}\subseteq \Z^k$ is a $B_s[1]$ set, it is impossible to find $A,B\subseteq S$ with $|A|=|B|=s$ satisfying \eqref{eq:tarry}. Notice that the discrete moment curve $\mathcal{C}_N^k = \{(n,n^2,\ldots,n^k):n\in [N]\}$ is a $B_{k}[1]$ set. This shows that $s$ must be greater than $g(M)$, where
\[
g(M) = \max\{s\in\N: \text{there exists a }B_s[1] \text{ subset of }\mathcal{C}_N^k \text{ of size }M\}.
\]
Given $s>k$, the asymptotic size of the maximum $B_s[1]$ subset of $\mathcal{C}_N^k$ seems still unknown. Here we provide a probabilistic construction that produces a large $B_s[1]$ subset of $\mathcal{C}_N^k$ when $s>k$ and $N$ is sufficiently large, but we do not expect such a construction to be optimal.

\begin{proposition}\label{prop:Bs1}
    Suppose $k,s$ are positive integers such that $k\geq 2$ and $s > k$. Then for any $\eps>0$, when $N>N_0(k,s,\eps)$, there exists a $B_s[1]$ subset of $\mathcal{C}_N^k$ of size $\gg_{k,s,\eps} N^{\frac{k-2\eps}{2(s-1)}}$.
\end{proposition}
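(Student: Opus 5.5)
The plan is the deletion (alteration) method. Pick a random subset $X\subseteq[N]$ by including each $n$ independently with probability $p=N^{-\theta}$, where $\theta$ is tuned at the end. The set $\tilde X=\{(n,\ldots,n^k):n\in X\}$ fails to be $B_s[1]$ exactly when there are two distinct multisets of size $s$ from $X$ with equal first $k$ power sums. Cancel common elements; this leaves a \emph{reduced} bad configuration. Such a configuration is two disjoint multisets of equal size $r\le s$ with equal power sums $j=1,\ldots,k$. It involves $u$ distinct integers, with $u\le 2r\le 2s$, and $u\ge k+1$ automatically, since $\mathcal C_N^k$ is $B_k[1]$. Removing one element of $X$ from each reduced bad configuration contained in $X$ leaves a $B_s[1]$ set, because any bad pair of $s$-multisets in the surviving set contains a surviving reduced configuration. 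So it suffices to show that the expected number of reduced bad configurations in $X$ is at most $\tfrac12\mathbb E|X|=\tfrac12 pN$; then some $X$ retains at least $pN/2$ elements.

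The counting step comes first. Fix a \emph{pattern}: the number $u$ of distinct values, and the signed multiplicities $c_1,\ldots,c_u\in\Z\setminus\{0\}$, bounded in terms of $s$. Each reduced configuration then corresponds to distinct $x_1,\ldots,x_u\in[N]$ with $\sum_i c_ix_i^j=0$ for $1\le j\le k$. Fix any $u-k$ of the $x_i$ arbitrarily, giving at most $N^{u-k}$ choices. The remaining $k$ unknowns satisfy $\sum_{i\le k}c_ix_i^j=\beta_j$ for $j=1,\ldots,k$. The Jacobian of this system is $\prod c_i\cdot j$-scaled Vandermonde, which is nonzero at points with distinct coordinates, so every such solution is isolated. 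By B\'ezout (degrees $1,2,\ldots,k$) there are at most $k!$ of them. Hence each pattern contributes $O_{k,s}(N^{u-k})$ configurations. Each configuration survives in $X$ with probability $p^u$, so the expected number of bad configurations is
\[
\ll_{k,s}\ \sum_{u=k+1}^{2s} p^uN^{u-k}.
\]

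Next, optimize $\theta$. The dominant term is $u=2s$ when $pN>1$. Requiring $p^{u}N^{u-k}\le N^{-\eps}pN$ for all $u\le 2s$ gives an admissible $\theta$, and the final size is $pN/2$. The $\eps$ in the statement absorbs the constants from the number of patterns and the $k!$ factors once $N>N_0(k,s,\eps)$.

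I expect the main obstacle to be matching the exponent $\frac{k-2\eps}{2(s-1)}$. The crude bound $N^{u-k}$ at $u=2s$ only yields $N^{k/(2s-1)}$. To reach the stated exponent, the count of the worst configurations must gain roughly an extra factor $N^{-k/(2s-2)}$ relative to the trivial bound. My first attempt would be a mean value bound: Corollary~\ref{cor:Vinogradov_in_J} with $t=k$, or Theorem~\ref{thm:Vinogradov} applied with weights $a_n=c$-multiplicities. This would bound the number of solutions to the weighted system with many variables. Failing that, I would split configurations by $u$. I would apply the B\'ezout count for small $u$, and for $u$ near $2s$ use that the configuration is a union of two $B$-type coincidences. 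Its count can then be bounded via Cauchy--Schwarz by $J_{s,k}(N)$-type quantities, choosing $\theta=1-\frac{k-2\eps}{2(s-1)}$ and verifying $\sum_u p^u\cdot\#\{\text{configurations}\}\le pN/2$.
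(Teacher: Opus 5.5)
Your framework is sound. The random set works, and so does the alteration over \emph{reduced} configurations. That alteration is valid, and in fact a little more economical than the paper's, which deletes an element from every nontrivial solution and so pays a factor $N^{L-L'}$ for the cancelled elements. The B\'ezout count $O_{k,s}(N^{u-k})$ per multiplicity pattern is also correct. But, as you note, this only gives size about $N^{k/(2s-1)}$, which is smaller than $N^{(k-2\eps)/(2(s-1))}$ once $\eps<k/(2(2s-1))$. Everything that would close the difference sits in your last paragraph as a list of things to try, so there is a genuine gap: the decisive estimate is missing.

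What you need is a bound for configurations with $u$ distinct values that saves about $N^{u/2}$ rather than $N^{k}$. The paper gets it from Lemma~\ref{lem:mixed-holder}. Fix a pattern of nonzero multiplicities $c_1,\ldots,c_u$. The number of $(x_1,\ldots,x_u)\in[N]^u$ with $\sum_i c_ix_i^j=0$ for $1\le j\le k$ equals $\int_{\mathbb{T}^k}\prod_{c_i>0}f(c_i\alpha)\prod_{c_i<0}\overline{f(|c_i|\alpha)}\,d\alpha$. Three steps then bound it by $J_{\lfloor u/2\rfloor,k}(N)^{1/2}J_{\lceil u/2\rceil,k}(N)^{1/2}$: H\"older with exponent $u$, the invariance $\|f(c\,\cdot)\|_u=\|f\|_u$ for nonzero integers $c$, and Cauchy--Schwarz. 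Vinogradov's mean value theorem makes this $\ll N^{u/2+\eta}$ for $u\le k(k+1)$ and $\ll N^{u-k(k+1)/2+\eta}$ beyond. The $\eps$ in the statement is there to absorb this $N^{\eta}$ loss; the combinatorial constants go into the implied constant, not into $\eps$.

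Your two suggested tools do not deliver this bound as stated:
\begin{enumerate}
\item The multiplicities must enter as dilations of the phase. Weights $a_n$ in Theorem~\ref{thm:Vinogradov} sit on the integers $n$, not on the variables, so they cannot encode them.
\item Corollary~\ref{cor:Vinogradov_in_J} needs at least $k(k+1)/2$ variables on each side. It therefore says nothing in the subcritical range, which is the one that matters when $s$ is close to $k$.
\end{enumerate}

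The count must also be sensitive to $u$, not just to the side length $r$, because a configuration survives with probability $p^u$ and $u$ can be far below $2r$. Suppose you bound all configurations of side $r$ by $J_{r,k}(N)\ll N^{r+\eta}$ while crediting them only with $p^u$, $u\ge k+2$. You then get terms like $p^{k+2}N^{s}$, which is far larger than $pN$. Your split-by-$u$ fallback can be rescued. For example, for $r\le k(k+1)/2$, use B\'ezout when $u\le r+k$ and $J_{r,k}(N)$ when $u>r+k$; both pieces are $o(pN)$ because $s\ge k+1$. But the proposal never fixes such a crossover or checks the intermediate range of $u$.

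Once the $u$-sensitive bound is in place, your own alteration finishes at once. Take $p=\delta N^{-1+(k-2\eps)/(2(s-1))}$. Then $pN^{1/2}<1$ because $s-1\ge k$, so $\sum_u p^uN^{u/2+\eta}$ is dominated by its smallest admissible $u$. The supercritical terms $p^uN^{u-k(k+1)/2+\eta}$ peak at $u=2s$. Both contributions are $o(pN)$.
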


We first prove the following lemma, which allows us to bound the number of solutions to equation~\eqref{multigrade} with repeated elements. 

\begin{lemma}\label{lem:mixed-holder}
Let
\[
        f(\alpha)=\sum_{1\le n\le N} e(\alpha_1 n+\alpha_2 n^2+\cdots+\alpha_k n^k),
        \qquad \alpha=(\alpha_1,\dots,\alpha_k)\in [0,1)^k .
\]
For an integer \(c\ne 0\), write
\[
        c\alpha=(c\alpha_1,\dots,c\alpha_k)\pmod 1.
\]
Let \(n_1,\dots,n_t,m_1,\dots,m_r\) be nonzero integers, and define
\[
        I=
        \int_{[0,1)^k}
        \prod_{i=1}^t f(n_i\alpha)
        \prod_{j=1}^r \overline{f(m_j\alpha)}
        \,d\alpha .
\]
If \(L=t+r\), then
\[
        |I|
        \le
        J_{\lfloor L/2\rfloor,k}(N)^{1/2}
        J_{\lceil L/2\rceil,k}(N)^{1/2}.
\]
\end{lemma}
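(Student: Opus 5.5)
The plan is to split the product into two groups of sizes $\lfloor L/2\rfloor$ and $\lceil L/2\rceil$ and apply Cauchy--Schwarz, then Hölder within each group, and finally undo the dilations $\alpha\mapsto c\alpha$ by periodicity. Concretely, I would order the $L$ factors $g_1,\dots,g_L$, each of the form $f(c\alpha)$ or $\overline{f(c\alpha)}$, and set $P=\prod_{i\le \lfloor L/2\rfloor}g_i$ and $Q=\prod_{i>\lfloor L/2\rfloor}g_i$. Cauchy--Schwarz gives
\[
|I|\le \Big(\int_{[0,1)^k}|P|^2\,d\alpha\Big)^{1/2}\Big(\int_{[0,1)^k}|Q|^2\,d\alpha\Big)^{1/2}.
\]

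For $|P|^2=\prod_{i\le u}|f(c_i\alpha)|^2$ with $u=\lfloor L/2\rfloor$, I would apply Hölder with $u$ equal exponents. This gives
\[
\int|P|^2\le \prod_{i\le u}\Big(\int_{[0,1)^k}|f(c_i\alpha)|^{2u}\,d\alpha\Big)^{1/u}.
\]
The key observation is that for a nonzero integer $c$, the map $\alpha\mapsto c\alpha \pmod 1$ on the torus $[0,1)^k$ is $|c|^k$-to-one and measure-preserving. Since $f$ is $1$-periodic in each coordinate, this yields
\[
\int_{[0,1)^k}|f(c\alpha)|^{2u}\,d\alpha=\int_{[0,1)^k}|f(\alpha)|^{2u}\,d\alpha=J_{u,k}(N).
\]
Here the last equality is the orthogonality expansion that counts solutions of \eqref{multigrade} with $s=u$. (For negative $c$, note $f(-\alpha)=\overline{f(\alpha)}$, which is harmless after taking absolute values.) Hence $\int|P|^2\le J_{u,k}(N)$, and similarly $\int|Q|^2\le J_{\lceil L/2\rceil,k}(N)$. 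Combining these gives the claim.

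The only step needing care is the measure-preservation under dilation. I would justify it via the Fourier expansion: writing $|f|^{2u}$ as a trigonometric polynomial $\sum_h a_h e(h\cdot\alpha)$, the integral picks out $a_0$. Since $c\ne0$, we have $h\cdot(c\alpha)$ with $ch=0$ iff $h=0$, so the integral of $|f(c\alpha)|^{2u}$ is also $a_0$. Degenerate cases, such as $L=1$ where $\lfloor L/2\rfloor=0$ and $J_{0,k}=1$, I would handle directly: then $\int|f(c\alpha)|\le(\int|f|^2)^{1/2}=J_{1,k}^{1/2}$.
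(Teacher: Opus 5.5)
Your argument is correct and uses the same three ingredients as the paper's proof: H\"older's inequality, the invariance of $\int|f(c\alpha)|^{p}\,d\alpha$ under the dilation $\alpha\mapsto c\alpha$, and Cauchy--Schwarz. You apply them in the opposite order. The paper first applies H\"older with exponent $L$ to all $L$ factors to reach $\int|f|^L$, and then splits $|f|^L=|f|^{\lfloor L/2\rfloor}|f|^{\lceil L/2\rceil}$ by Cauchy--Schwarz; you split the factors into two groups first, so that only the even moments $J_{\lfloor L/2\rfloor,k}(N)$ and $J_{\lceil L/2\rceil,k}(N)$ ever appear.
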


\begin{proof}
By H\"older's inequality with exponent \(L\),
\[
        |I|
        \le
        \prod_{i=1}^t \|f(n_i\cdot)\|_L
        \prod_{j=1}^r \|f(m_j\cdot)\|_L .
\]
Since multiplication by any nonzero integer \(c\) is measure-preserving on the torus \([0,1)^k\),
we have
\[
        \|f(c\cdot)\|_L=\|f\|_L .
\]
Hence
\[
        |I|
        \le
        \|f\|_L^L
        =
        \int_{[0,1)^k}|f(\alpha)|^L\,d\alpha .
\]
Put \(u=\lfloor L/2\rfloor\) and \(v=\lceil L/2\rceil\). Thus, by Cauchy--Schwarz, we have
\[        
|I|
        \le
        \left(\int_{[0,1)^k}|f(\alpha)|^{2u}\,d\alpha\right)^{1/2}
        \left(\int_{[0,1)^k}|f(\alpha)|^{2v}\,d\alpha\right)^{1/2}
        =J_{u,k}(N)^{1/2} J_{v,k}(N)^{1/2}. \qedhere
\]
\end{proof}

\begin{proof}[Proof of Proposition~\ref{prop:Bs1}]
Let $S\subseteq [N]$ be a random set formed by picking each element independently with probability $p$.
For any $k< L\leq 2s$, we need to bound the number of nontrivial solutions to \eqref{multigrade} in $[N]$ with exactly $L$ distinct elements, denoted by $T_{L,s}(N)$. (Using the Vandermonde matrix, it is easy to see that when $L\leq k$, all solutions to \eqref{multigrade} are trivial.) For each such solution, if there is some $a_i = b_{i'}$, we delete $a_i$ and $b_{i'}$. Continue this process until the remaining sequence $a_{i_1},\ldots,a_{i_t}$ is disjoint from $b_{i_1'},\ldots,b_{i_t'}$, and let $L'$ denote the number of distinct elements in $\{a_{i_1},\ldots,a_{i_t}\}\cup\{b_{i_1'},\ldots,b_{i_t'}\}$. It follows from Lemma~\ref{lem:mixed-holder} that  
\[
T_{L,s}(N) = O_{k,s}\left(\sum_{\max(k,L-s+k)\leq L'\leq L}J_{\lfloor L'/2\rfloor,k}(N)^{1/2}J_{\lceil L'/2\rceil,k}(N)^{1/2}N^{L-L'}\right).
\]
Fix some $\eps>0$, Vinogradov's mean value theorem implies that 
\[
J_{\lfloor L'/2\rfloor,k}(N)^{1/2}J_{\lceil L'/2\rceil,k}(N)^{1/2} = \begin{cases}
    O_{k,s,\eps}(N^{L'/2+\eps}), & \text{if }2\leq L'\leq k(k+1)\\
    O_{k,s,\eps}(N^{L'-k(k+1)/2+\eps}), & \text{if }k(k+1)<L'\leq 2s.
\end{cases}
\]
Hence we have 
$$
T_{L,s}(N) =\begin{cases}
     O_{k,s,\eps}(N^{L-k/2+\eps}),\ &\text{if }L\leq s\\
     O_{k,s,\eps}(N^{(L+s-k)/2+\eps}),\ &\text{if }s<L\leq s+k^2\\
     O_{k,s,\eps}(N^{L-k(k+1)/2+\eps}),\ &\text{if }s+k^2<L\leq 2s
\end{cases}
$$
Notice that if $s\leq k^2$, only the first two cases are possible.
Since $s>k$, by taking $p = \delta N^{\frac{1-s+k/2-\eps}{s-1}}<1$ for some sufficiently small constant $\delta = \delta(k,s,\eps) >0$, when $N>N_0(k,s,\eps)$, one can verify that the expected number of nontrivial solutions in $S$ is 
\[
\sum_{L=k}^{2s}p^LT_{L,s}(N) = O_{k,s,\eps}\left(p^sN^{s-k/2+\eps} \right) < pN/2.
\]
Now we can apply the alteration method to get a subset of size at least $pN/2$ that contains no nontrivial solutions. Hence there exists a $B_s[1]$ subset of $\mathcal{C}_N^k$ of size 
\[
\frac{pN}{2} = \frac{\delta}{2}N^{\frac{k-2\eps}{2(s-1)}}.\qedhere
\]
\end{proof}
Therefore, given $M = N^c$ for some $0<c<\frac{k}{2(k-1)}$, when $N$ is sufficiently large, we have 
\[g(M)\geq \bigg\lfloor\frac{k+2c-2\eps}{2c}\bigg\rfloor = \bigg\lfloor1+\frac{(k-2\eps)\log N}{2\log M}\bigg\rfloor.\]

This shows that, in the interval setting, one cannot force such a conclusion
for all subsets of size \(N^{O(k/2^k)}\) with the same number of variables.

\begin{remark}
In the general setting, one can still obtain a universal lower bound by a greedy
argument. Let $R$ be an integral domain and \(T\subseteq R\) be finite, and write
\(\mathcal C_T^k=\{(n,n^2,\ldots,n^k):n\in T\}\subseteq R^k\).
Let \(H=\infty\) if \(\mathrm{char}(R)=0\), and
\(H=\mathrm{char}(R)-1\) otherwise. For \(M\le |T|\), put
\[
g_T(M)=
\max\{h\in\mathbb N:h\le H,\ 
\text{there exists a }B_h[1]\text{ subset of }\mathcal C_T^k
\text{ of size }M\}.
\]
In general, good estimates for the number of solutions to \eqref{multigrade}
in \(T\) may not be available. Nevertheless, a standard greedy argument gives a
\(B_h[1]\) subset of \(\mathcal C_T^k\) of size
\(\gg_h |T|^{1/(2h-1)}\) for every positive integer \(h\le H\). Consequently, if
\(M=|T|^c\) for some \(0<c<1\), then, for \(|T|\) sufficiently large,
\[
g_T(M)\ge
\min\left\{H,\left\lceil\frac{c+1}{2c}\right\rceil-1\right\}.
\]
\end{remark}

\section{Proof of Corollaries~\ref{wooley_corollary} and~\ref{smooth_corollary}}\label{sec:cor}
\subsection{Proof of Corollary \ref{wooley_corollary}}

We first prove a combinatorial lemma.

\begin{lemma}\label{lem:coordinate-packing}
Let \(\mathcal A\subseteq\prod_{j\in\Lambda}\Omega_j\), where \(\Lambda\) and
the \(\Omega_j\)'s are finite nonempty sets. If \(\mathcal A\) contains no \(m\) elements
\(\mathbf u_1,\ldots,\mathbf u_m\) satisfying
\[
        u_{h,j}\ne u_{t,j}\qquad(1\le h<t\le m,\ j\in\Lambda),
\]
then
\[
        |\mathcal A|
        \le
        (m-1)\sum_{j\in\Lambda}
        \prod_{\substack{r\in\Lambda\\ r\ne j}}|\Omega_r|.
\]
\end{lemma}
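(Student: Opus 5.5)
We argue by induction on $m$; equivalently, we bound the size of a maximal ``coordinatewise-distinct'' family and show that every element of $\mathcal A$ collides with it somewhere. For $m=1$ the hypothesis is vacuous only if $\mathcal A$ is empty, and indeed a single element is trivially coordinatewise distinct. So the hypothesis forces $\mathcal A=\emptyset$, and the bound $0$ holds. The general step, however, is cleanest as a greedy removal argument rather than a formal induction.

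Fix $j\in\Lambda$ and $\omega\in\Omega_j$. Call the fiber $\{\mathbf u\in\mathcal A: u_j=\omega\}$ a \emph{line}; it has size at most $\prod_{r\ne j}|\Omega_r|$. Now run the following process. While $\mathcal A$ is nonempty, pick any $\mathbf u_1\in\mathcal A$ and delete from $\mathcal A$ every element sharing some coordinate with $\mathbf u_1$, that is, the union over $j\in\Lambda$ of the lines $\{u_j=u_{1,j}\}$. Then pick $\mathbf u_2$ from what remains, and continue in the same way.

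The chosen elements $\mathbf u_1,\mathbf u_2,\ldots$ are pairwise distinct in every coordinate, because each later choice avoids all lines through earlier ones. By hypothesis, therefore, the process must stop after at most $m-1$ picks, at which point $\mathcal A$ has been exhausted. Each pick removes at most $\sum_{j\in\Lambda}\prod_{r\ne j}|\Omega_r|$ elements, namely the union of $|\Lambda|$ lines, each bounded as above. Summing over the at most $m-1$ steps gives
\[
|\mathcal A|\le (m-1)\sum_{j\in\Lambda}\prod_{r\ne j}|\Omega_r|.
\]

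There is no real obstacle here. The only point needing care is the termination claim: if a $k$-th pick is made, the $k$ chosen points form a coordinatewise-distinct family of size $k$, so a $k$-th pick with $k\ge m$ would contradict the hypothesis, and hence $\mathcal A$ must be empty after at most $m-1$ deletion rounds.
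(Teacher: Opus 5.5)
Your greedy removal argument is correct and is essentially the paper's proof. Your picks form a maximal coordinatewise-separated family of size at most $m-1$, every element of $\mathcal A$ lies on one of the $|\Lambda|$ fibers through some pick, and the count is identical; the opening remarks about induction on $m$ and the case $m=1$ are unnecessary but harmless.
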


\begin{proof}
Choose a maximal coordinatewise separated collection
\(\mathbf u_1,\ldots,\mathbf u_q\in\mathcal A\). Then \(q\le m-1\), and by
maximality every element of \(\mathcal A\) shares a coordinate with some
\(\mathbf u_\ell\). For each fixed \(\ell\) and \(j\), at most
\(\prod_{r\ne j}|\Omega_r|\) elements of the ambient product have \(j\)-th
coordinate \(u_{\ell,j}\). Summing over \(\ell\le q\) and \(j\in\Lambda\)
gives the result.
\end{proof}

Next we prove Corollary~\ref{wooley_corollary} by adapting the proof from
\cite[Section~9]{W12}. 
\begin{proof}[Proof of Corollary~\ref{wooley_corollary}]

We will use the notation \(I^\sharp:=\{1,2,\ldots,k\}\setminus I\) for a
given subset \(I\subseteq \{1,2,\ldots,k\}\). Given \(S\subseteq [N]\), for a
set \(I\subseteq \{1,2,\ldots,k\}\) and for a vector of integers
\(\vec z=(z_i)_{i\in I}\), let \(r_I(\vec z)\) be the number of ordered
\(s\)-tuples \((x_1,\ldots,x_s)\in S^s\) with pairwise distinct entries such that
\[
        \sum_{\ell=1}^s x_\ell^i=z_i,\qquad i\in I.
\]
Let \(G_J(S)\) denote the number of pairs of such ordered \(s\)-tuples with
equal \(i\)-th power sums for every \(i\in J\). Equivalently,
\[
        G_J(S)=\sum_{\vec z}r_J(\vec z)^2.
\]
Since \(r_J\) has total mass \(\gg_s |S|^s\) and support of size
\(O_s(N^{\sum_{i\in J}i})\), Cauchy--Schwarz gives
\begin{equation}\label{eq:GJ-lower}
        G_J(S)\gg_s |S|^{2s}N^{-\sum_{i\in J}i}.
\end{equation}

For any \(\vec z=(z_i)_{i\in J}\), we have
\[
        r_J(\vec z)
        =
        \sum_{\substack{(z_j)_{j\in J^\sharp}\\ 1\le z_j\le sN^j}}
        r_{[k]}(z_1,\ldots,z_k).
\]
Let \(P_{\vec z}\) denote the set of vectors \((z_j)_{j\in J^\sharp}\) such
that \(r_{[k]}(z_1,\ldots,z_k)>0\). Then, by Cauchy--Schwarz,
\begin{align*}
G_J(S)
&\leq
\sum_{\vec z}|P_{\vec z}|
\sum_{\substack{(z_j)_{j\in J^\sharp}\\ 1\le z_j\le sN^j}}
        r_{[k]}(z_1,\ldots,z_k)^2  \\
&\leq
\max_{\vec z}|P_{\vec z}|\cdot G_{[k]}(S),
\end{align*}
where \(G_{[k]}(S)=\sum_{\vec w}r_{[k]}(\vec w)^2\).

Suppose for contradiction that, for every \(\vec z=(z_i)_{i\in J}\), the set
\(P_{\vec z}\) contains no \(m\) vectors
\(\mathbf u_1,\ldots,\mathbf u_m\) satisfying
\[
        u_{h,j}\ne u_{t,j}
        \qquad(1\le h<t\le m,\ j\in J^\sharp).
\]
Then Lemma~\ref{lem:coordinate-packing} gives
\begin{equation}\label{eq:supp}
        \max_{\vec z}|P_{\vec z}|
        \ll_k
        (m-1)\sum_{h\in J^\sharp}N^{(\sum_{j\in J^\sharp}j)-h}
        \ll_{k}
        mN^{(\sum_{j\in J^\sharp}j)-1}.
\end{equation}

Now Corollary~\ref{cor:Vinogradov_in_J}, applied to the unrestricted ordered
solutions, gives
\[
        G_{[k]}(S)
        \ll_{k,\eps} N^{s-k(k+1)/2+\eps}|S|^s
        \ll_{k,\eps} N^\eps |S|^s,
\]
since \(G_{[k]}(S)\) is a subcount of the unrestricted count and
\(s=k(k+1)/2\). Combining this estimate with inequality~\eqref{eq:supp}, we get
\[
        G_J(S)
        \ll_{k,\eps}
        mN^{(\sum_{j\in J^\sharp}j)-1+\eps}|S|^s.
\]
Comparing this with inequality~\eqref{eq:GJ-lower}, and using
\[
        \sum_{i\in J}i+\sum_{j\in J^\sharp}j=s,
\]
we obtain
\[
        |S|^s\ll_{k,\eps}mN^{s-1+\eps}.
\]
Choosing \(\eps<1/(k^2+k+1)\), this contradicts the assumption
\[
        |S|\geq m^{2/k(k+1)}N^{1-2/(k^2+k+1)}
\]
for all sufficiently large \(N\).

Hence there exist some \(\vec z=(z_i)_{i\in J}\) and
\(m\) vectors \(\mathbf u_1,\ldots,\mathbf u_m\in P_{\vec z}\) such that
\[
        u_{h,j}\ne u_{t,j}
        \qquad(1\le h<t\le m,\ j\in J^\sharp).
\]
For each \(1\le h\le m\), choose an ordered \(s\)-tuple
\((x_{1h},\ldots,x_{sh})\in S^s\) with pairwise distinct entries whose full power-sum vector is
\((\vec z,\mathbf u_h)\), and set
\[
        A_h=\{x_{1h},\ldots,x_{sh}\}.
\]
Then \(|A_h|=s\), and the sets \(\{A_h\}_{1\leq h\leq m}\) satisfy the required equalities for
\(j\in J\) and inequalities for \(j\in J^\sharp\). This finishes the proof.
\end{proof}

\subsection{Proof of Corollary \ref{smooth_corollary}}\label{sec:smooth}

We will deduce the corollary from a more technical one, given as follows:

\begin{corollary}\label{product_difference}
Suppose \(0<\theta<1\), \(k\ge1\). Put
\[
        k'=\left\lceil \frac{k}{1-\theta}\right\rceil,
        \qquad
        s=\frac{k'(k'+1)}2 .
\]
Then there is \(N_0=N_0(k,\theta)\) such that, whenever \(N\ge N_0\), for any integer $m\geq 2$ and
\(C\subseteq (N,N+N^\theta]\cap \mathbb Z\) satisfying
\(|C|>m^sN^{\theta(1-1/(s+1))}\), the following holds. For every \(1\le j\le k\),
there exist elements \(x_{i,h}\in C\) with \(1\le i\le s\) and \(1\le h\le m\), such that, writing
\[
        P_h=\prod_{i=1}^s x_{i,h}\qquad(1\le h\le m),
\]
we have, for all \(1\le h<t\le m\),
\[
        |P_h-P_t|
        \in
        [\kappa_1N^{s-j},\kappa_2N^{s-j(1-\theta)}],
\]
where \(\kappa_1,\kappa_2>0\) depend only on \(k\) and \(\theta\).
\end{corollary}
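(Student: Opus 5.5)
The plan is to reduce to Corollary~\ref{wooley_corollary} by writing every element of $C$ as $N$ plus a small shift, and then reading off $P_h-P_t$ from the elementary symmetric polynomials of the shifts via Newton's identities.

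\textbf{Setup and choice of sets.} Put $M=\lfloor N^\theta\rfloor$ and $S=C-N\subseteq[1,M]$, and write $x=N+y$ with $y\in S$. For $y_1,\dots,y_s\in S$ we have
\[
\prod_{i=1}^s (N+y_i)=\sum_{r=0}^{s}N^{s-r}e_r(y_1,\dots,y_s),
\]
where $e_r$ is the $r$-th elementary symmetric polynomial. Fix $1\le j\le k$. Since $k'\ge k\ge j$ and $k'\ge 2$, the set $J=\{1,\dots,k'\}\setminus\{j\}$ is a proper subset. We apply Corollary~\ref{wooley_corollary} with $k'$ in place of $k$, $M$ in place of $N$, and this $J$. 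Its size hypothesis is
\[
|S|\ge m^{2/(k'(k'+1))}M^{1-2/(k'^2+k'+1)}=m^{1/s}M^{1-1/(s+1/2)}.
\]
This follows from $|C|>m^sN^{\theta(1-1/(s+1))}$, because $1-1/(s+1/2)<1-1/(s+1)$ and $m^s\ge m^{1/s}$. We also need $M\ge N_0(k')$, which is where $N\ge N_0(k,\theta)$ enters. The corollary gives $m$ sets $A_1,\dots,A_m\subseteq S$ of size $s$ with equal power sums $p_i$ for every $i\in J$ and pairwise distinct $p_j$. Since $p_j\in\mathbb Z$, this means $1\le|\Delta p_j|\le sM^j\le sN^{\theta j}$ for every pair $h\neq t$. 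We take $x_{i,h}=N+y$ for $y\in A_h$.

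\textbf{Expanding $P_h-P_t$.} Newton's identities write $e_r$, for $r\le k'$, as a weighted-homogeneous polynomial of weight $r$ in $p_1,\dots,p_r$. We split the sum for $P_h-P_t$ into three ranges.
\begin{itemize}
\item For $r<j$ the values $e_r$ coincide, so these terms cancel.
\item For $r=j$ we get $\Delta e_j=\pm\Delta p_j/j$, so this term has size $\asymp N^{s-j}|\Delta p_j|$.
\item For $j<r\le k'$, every monomial surviving in $\Delta e_r$ contains $p_j^a$ with $a\ge1$. Using $|\Delta(p_j^a)|\le a|\Delta p_j|\max|p_j|^{a-1}$ and $|p_i|\le sN^{\theta i}$, we get $|\Delta e_r|\ll|\Delta p_j|N^{\theta(r-j)}$. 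Hence $N^{s-r}|\Delta e_r|\ll|\Delta p_j|N^{s-j}N^{-(r-j)(1-\theta)}$, which is $o(N^{s-j}|\Delta p_j|)$.
\item For $r>k'$ we use the trivial bound $N^{s-r}\binom{s}{r}N^{\theta r}\ll N^{s-r(1-\theta)}$. Since $r(1-\theta)\ge(k'+1)(1-\theta)>k\ge j$, this is $o(N^{s-j})$ uniformly, and hence also $o(N^{s-j}|\Delta p_j|)$ because $|\Delta p_j|\ge 1$.
\end{itemize}

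\textbf{Conclusion and main obstacle.} For $N\ge N_0$ the $r=j$ term dominates, so $|P_h-P_t|\asymp N^{s-j}|\Delta p_j|$ with constants depending only on $k$ and $\theta$. Combined with $1\le|\Delta p_j|\le sN^{\theta j}$, this gives exactly the range $[\kappa_1N^{s-j},\kappa_2N^{s-j(1-\theta)}]$. I expect the main obstacle to be the middle range $j<r\le k'$. A naive trivial bound there fails when $\theta>1/(j+1)$, so the argument must use the structural fact that $\Delta e_r$ is divisible, in the appropriate quantitative sense, by $\Delta p_j$ once all other power sums up to $k'$ agree. This is precisely why we use the large $k'$ and take $J^\sharp=\{j\}$ rather than $J=\{1,\dots,j-1\}$.
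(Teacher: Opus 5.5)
Your proof is correct. Its first half coincides with the paper's: you shift by $N$ and apply Corollary~\ref{wooley_corollary} with $k'$ in place of $k$, $N^\theta$ in place of $N$, and $J=\{1,\dots,k'\}\setminus\{j\}$. Your use of $M=\lfloor N^\theta\rfloor$ keeps the interval integral, and your check of the density hypothesis via $2/(k'^2+k'+1)=1/(s+1/2)$ is more explicit than the paper's.

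The second half takes a different route. The paper passes to logarithms. The expansion $\log P_h=s\log N+\sum_{r\ge1}(-1)^{r+1}p_r^{(h)}/(rN^r)$ is \emph{linear} in the power sums, so all terms with $r\le k'$, $r\ne j$ cancel exactly. Only the tail $r>k'$ then needs a trivial bound. The price is converting the estimate on $|\log(P_h/P_t)|$ back to $|P_h/P_t-1|$ and then multiplying by $P_t\asymp N^s$.

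You instead keep the exact finite expansion $\prod_i(N+y_i)=\sum_r N^{s-r}e_r$. This avoids the infinite series and the log-to-ratio conversion. However, because the $e_r$ are nonlinear in the $p_i$, you must treat the middle range $j<r\le k'$ separately. Your argument there is sound: Newton's identities and weighted homogeneity show that every surviving monomial carries a factor $\Delta(p_j^a)$, with $|\Delta(p_j^a)|\le a|\Delta p_j|(sN^{\theta j})^{a-1}$. This gives the needed saving $N^{-(r-j)(1-\theta)}$ relative to the main term.

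As you observe, a trivial bound would fail in that range once $\theta>1/(j+1)$. The logarithm is exactly what makes this issue disappear in the paper. In fact, Newton's identities are the coefficientwise form of $\prod_i(1+y_i/N)=\exp\bigl(\sum_{r\ge1}(-1)^{r+1}p_r/(rN^r)\bigr)$, so the two proofs are two readings of the same identity.
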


To apply this corollary we first observe that 
since $[N^{(1-\alpha)/r}/2]$ contains at most 
half the elements of $C$, at least half the elements of $C$ are contained in $D := (N^{(1-\alpha)/r}/2, N^{1/r}]$. Next, for $\theta = 1/\sqrt{2r}$
we partition the interval $D$ into subintervals
$$
(M_j,\ M_j+ M_j^\theta],\ j=1,2,\ldots
$$
with $M_1 = N^{(1-\alpha)/r}/2$.  Now, if the $j$-th interval contained at most $M_j^{\theta-\alpha}/4$ many elements of $C$, then the total number of elements of $C$ in all such intervals would be at most
$$
\ll \ \int_{M_1}^{N^{1/r}} {dx \over 4 x^\alpha}\ \ll\ 
{N^{(1-\alpha)/r} \over 4(1-\alpha)}\ <\ {|C|\over 2},
$$
for $0 < \alpha < 1/2$.  Hence, for some choice of $j = j_0$, the interval contains at least $M_{j_0}^{\theta - \alpha}/4$ elements in $C$.  Let $M' = M_{j_0}$ and let $M = \lfloor (M')^r \rfloor$.

Next, assume $r$ has the form $k'(k'+1)/2$ for some
integer $k'$.  We apply Corollary \ref{product_difference} to the interval $(M',M'+(M')^\theta]$ and its intersection with $C$, and with parameters $s=r$, 
$m = \lfloor((M')^{\frac{\theta}{r+1} - \alpha}/4)^{1/r}\rfloor$,
and $j = k$.  From our choice for $\theta$ we note that $k = k'-1$, so we also get $j = k'-1 = \lfloor \sqrt{2r}\rfloor-1$. Since $r\geq 3$, we must have $j\geq 1$. From this
it follows that 
$$
|P_h - P_t| \in [\kappa_1 M^{1- \sqrt{2 \over r} + \frac{1}{r}}, \kappa_2 M^{1 - \sqrt{2\over r} + \frac{3}{r} - \sqrt{\frac{2}{r^3}}}].
$$
Note that $P_h\leq (M'+(M')^\theta)^r\leq 2M$ when $N$ (and hence $M'$) is sufficiently large. Taking $\kappa<\min(\kappa_1,\kappa_2)$ finishes the proof.

We conclude the paper with a proof of Corollary \ref{product_difference}.

\begin{proof}[Proof of Corollary \ref{product_difference}]
Let \(S=C-N=\{c-N:c\in C\}\subseteq [1,N^\theta]\). Since
\[
        1-\frac{1}{s+1}
        =
        1-\frac{2}{k'(k'+1)+2}
        >
        1-\frac{2}{k'(k'+1)+1},
\]
the density assumption is strong enough to apply Corollary~\ref{wooley_corollary}
to \(S\), with \(N^\theta\) in place of \(N\), with \(k'\) in place of \(k\),
and with
\[
        J=\{1,2,\ldots,k'\}\setminus\{j\}.
\]
Thus we obtain \(m\) subsets of \(S\), written as
\[
        \{a_{1,h},\ldots,a_{s,h}\}\qquad(1\le h\le m),
\]
such that their \(r\)-th power sums agree for every \(r\in J\), while their
\(j\)-th power sums are pairwise distinct.

For \(1\le h\le m\), put
\[
        P_h=\prod_{i=1}^s (N+a_{i,h}).
\]
Fix \(1\le h<t\le m\), and put
\[
        D_{h,t}
        =
        (-1)^{j+1}\left(
        \sum_{i=1}^s a_{i,h}^j-\sum_{i=1}^s a_{i,t}^j
        \right).
\]
Then \(D_{h,t}\) is a nonzero integer and
\[
        1\le |D_{h,t}|\le 2sN^{j\theta}.
\]
Using the Taylor expansion for \(\log(1+x)\), we have
\[
\left|\log {P_h\over P_t}\right|
=
\left|
        {D_{h,t}\over jN^j}+E_{h,t}
\right|,
\]
where all terms with \(1\le r\le k'\), \(r\ne j\), vanish, and
\[
        |E_{h,t}|
        \le
        \sum_{r=k'+1}^\infty {2sN^{r\theta}\over rN^r}
        \ll_{k,\theta}
        N^{(k'+1)(\theta-1)}
        < N^{-k-\varepsilon}
\]
for some \(\varepsilon>0\). Hence, for sufficiently large \(N\),
\[
        c_1N^{-j}
        \le
        \left|\log {P_h\over P_t}\right|
        \le
        c_2N^{-j(1-\theta)}.
\]
Since the logarithm tends to \(0\), this gives
\[
        {c_3\over N^j}
        \le
        \left|{P_h\over P_t}-1\right|
        \le
        {c_4\over N^{j(1-\theta)}}.
\]
Finally, since \(P_t\asymp_s N^s\), multiplying by \(P_t\) gives
\[
        |P_h-P_t|
        \in
        [\kappa_1N^{s-j},\kappa_2N^{s-j(1-\theta)}].
\]
Since this holds for every \(1\le h<t\le m\), and since \(N+a_{i,h}\in C\),
the corollary follows.
\end{proof}

\bibliographystyle{abbrv}
\bibliography{main}

\end{document}